\numberwithin{equation}{section}
\newtheorem{theorem}{Theorem}[section]
\newtheorem{lemma}[theorem]{Lemma}
\newtheorem{proposition}[theorem]{Proposition}
\newtheorem{remark}[theorem]{Remark}
\newcommand{\RR}{\mathbb{R}}
\newcommand{\labitem}[2]{%
	\def\@itemlabel{\textbf{#1}}
	\item
	\def\@currentlabel{#1}\label{#2}}
\begin{document}

\title{Phase mixing estimates for the nonlinear Hartree equation of infinite rank} 

\author{Chanjin You\footnotemark[1]
}

\maketitle

\footnotetext[1]{MSC (2020): Primary 35Q40 Secondary 35B40}
\footnotetext[1]{Penn State University, Department of Mathematics, State College, PA 16802. Email: cby5175@psu.edu.}

%\maketitle

\begin{abstract}

In this paper, we prove the phase mixing estimates for the density and its derivatives associated with the nonlinear Hartree equation around certain translation-invariant equilibria. Given a defocusing short-range interaction potential, we provide a precise criterion for the Penrose--Lindhard stability based on the marginal of the equilibrium. For linearly stable equilibria, pointwise decay estimates of the Green function associated with the linearized operator in Fourier space are established. The proof of phase mixing estimates is obtained through a nonlinear iterative scheme. An alternative proof of scattering is also provided.

\end{abstract}

%\tableofcontents

%%%%%%%%%%%%%%%%%%%%%%
\section{Introduction}
A system of infinitely many quantum particles in $\mathbb{R}^d$, $d \ge 3$, can be described by the nonlinear Hartree equation
\begin{equation}
	\label{Hartree}
	\begin{cases}
		\begin{aligned}
			&i \partial_{t} \gamma = [ - \Delta + w \star_x \rho_\gamma, \, \gamma ] \\
			&\gamma_{\vert_{t=0}} = \gamma_{0}
		\end{aligned}
	\end{cases}
\end{equation}
where $\gamma(t)$ is a one-particle density operator and $\rho_{\gamma}(t,x)$ is a density function associated with $\gamma(t)$. Namely, $\gamma(t)$ is a nonnegative  bounded operator on $L^2(\mathbb{R}^d)$, and its density is defined by
	\begin{equation}\label{def:density}
		\rho_{\gamma}(t,x) = \gamma(t,x,x),
	\end{equation}
where $\gamma(t,x,y)$ is the integral kernel of $\gamma(t)$. The two-body interaction potential $w$ is a given finite positive Borel measure on $\mathbb{R}^d$, such that $\widehat{w}$ is radial, $\widehat{w}(k) \ge 0$, $\widehat{w}(0) <\infty$, and $\widehat{w}'(k) \le 0$.
It covers short-range potentials, i.e., $w \in L^1$, as well as singular measures like the Dirac delta measure $w= \delta$.
The convolution term $w \star_x \rho_{\gamma}$ is defined by
	\[
		(w\star_x \rho_{\gamma})(x) = \int_{\mathbb{R}^d} \rho_{\gamma}(x-y) \, dw(y),
	\]
which captures the mean-field interaction among particles.

In the case when $\gamma(t)$ is of finite rank with the spectral decomposition given by
\[
	\gamma(t) = \sum_{j=1}^{N} \lvert u_{j}(t) \rangle \langle u_{j}(t) \rvert
\]
for some finite $N$ and orthonormal functions $u_{j}(t)$ in $L^2 (\mathbb{R}^d)$, the Cauchy problem \eqref{Hartree} in the density operator form reduces to a system of coupled Hartree equations
\begin{equation}
	\label{Hartree2}
	\begin{cases}
		\begin{aligned}
			&i \partial_{t} u_{j} = \Big(-\Delta + w \star_x \sum_{k=1}^{N} |u_{k}|^2 \Big) u_{j} \\
			&u_{j {\vert_{t=0}}} = u_{j,0}
		\end{aligned}
	\end{cases}
\end{equation}
for $1 \le j \le N$, noting that $$\rho_{\gamma}(t,x) = \sum_{j=1}^{N}  |u_{j}(t,x)|^2.$$ Since $\int_{\mathbb{R}^d} \rho_{\gamma}(t) \, dx = N$ can be interpreted as the number of particles, it is evident that \eqref{Hartree} provides a natural generalization of this coupled system to infinite quantum systems.

The Cauchy problem \eqref{Hartree} for trace class initial data was studied in \cite{Bove1974, Bove1976, Chadam1976, Zagatti1992}. Global well-posedness and small data scattering for the general Kohn-Sham equation was proved in \cite{Pusateri2021}. Modified scattering for small data solutions to \eqref{Hartree} with the Coulomb potential in three dimensions is established in the companion paper \cite{Nguyen2025a}.
Meanwhile, Lewin and Sabin \cite{Lewin2015} first addressed the Hartree equation for density operators outside the trace class. They considered the global-in-time Cauchy problem for perturbations near a translation-invariant steady state $f = f(-\Delta)$ with finite constant density $\rho_{f}= \int_{\mathbb{R}^d} f(|p|^2) dp$, where the two-body interaction potential $w$ is assumed to be sufficiently smooth and decay rapidly. To be specific, they considered the equation for the perturbation
\begin{equation}
	\label{Hartree-perturbation}
	\begin{cases}
		\begin{aligned}
			&i \partial_{t} \gamma = [ - \Delta + w \star_x \rho_\gamma, \, \gamma + f ] \\
			&\gamma_{\vert_{t=0}} = \gamma_{0}.
		\end{aligned}
	\end{cases}
\end{equation}
The asymptotic stability of such equilibria in $d=2$ was shown in \cite{Lewin2014}, later extended to $d\ge 3$ in \cite{Chen2018}. These results were extended to singular interaction potentials (e.g., Dirac delta potential), and a larger class of steady states (e.g., Fermi gas at zero temperature), in \cite{Chen2017, Hadama2025b, Hadama2025c}. The linear analysis for Coulomb potential case, which is long-range, was established in \cite{Nguyen2025}. For an equivalent formulation of \eqref{Hartree-perturbation} using random fields, refer to \cite{Suzzoni2015, Collot2020, Collot2022}.

In this paper, we are interested in the asymptotic stability of the density operator near a class of translation-invariant equilibria $f = f(-\Delta)$ with finite regularity. We consider \eqref{Hartree-perturbation} for $t \ge 0$ where $w$ is a finite positive Borel measure on $\mathbb{R}^d$ whose Fourier transform $\widehat{w}$ is radial and satisfies $\widehat{w}(k) \ge 0$, $\widehat{w}(0) <\infty$, and $\widehat{w}'(k) \le 0$. Following the framework in \cite[Lemma~2.1]{Nguyen2025}, we use the Fourier-Laplace transform to obtain the resolvent equation
\begin{equation}\label{eq:resolvent-intro}
	\widetilde{\rho}_{\gamma}(\lambda, k) = \frac{1}{D(\lambda, k)} \widetilde{S}(\lambda,k)
\end{equation}
where the dispersion relation, known as the Lindhard dielectric function, is given by
\[
	D(\lambda, k)= 1 + i \widehat{w}(k) \int_{\mathbb{R}^d} \frac{f(|p|^2) - f(|k-p|^2) }{\lambda - 2i k \cdot p + i |k|^2} \, dp
\]
and $\widetilde{S}(\lambda, k)$ is the Fourier-Laplace transform of the source term
\[
	S(t,x) = \rho\left[ e^{it\Delta} \gamma_{0} e^{-it\Delta} -i \int_{0}^{t} e^{i(t-s)\Delta} [w  \star_x \rho_{\gamma}, \gamma ](s) e^{i(s-t)\Delta} \, ds \right](x).
\]
For the short-range nature of the potential, it is expected that certain classes of equilibria do not induce any oscillatory modes of their Lindhard dielectric functions $D(\lambda, k)$, which implies linear stability.
While previous works 
provided only sufficient conditions, this work establishes a precise criterion \ref{assumption:stability} for the nonexistence of oscillatory modes, see Proposition \ref{prop:nozero3}. This criterion is achieved through a detailed analysis of the Fourier-Laplace symbol $D(\lambda, k)$.
Some physically important equilibria including a Fermi gas at zero temperature in $d \ge 3$ fail to satisfy \ref{assumption:stability}, and their asymptotic stability is beyond the scope of this paper but will be of future interest to us. 

Another contribution of this paper is the pointwise time decay of the density function and its derivatives, known as the \textit{phase mixing} estimates, which have not been established for the Hartree equation at a nonlinear level. We prove that the $L^{\infty}_x$ decay of the density $\partial_x^{n} \rho_{\gamma}(t)$ associated with \eqref{Hartree-perturbation} is $t^{-d-n}$. This is optimal in the sense that it decays at the same rate as the density of free Hartree dynamics and gains an additional $ t^{-1}$ decay for each spatial derivative. Propagation of such estimates is important in view of previous works on \textit{Landau damping} for the classical case \cite{Mouhot2011, Hwang2011, Bedrossian2018, HanKwan2021, Nguyen2020}, in the context of understanding the long time behavior of classical particles. In this work, we establish phase mixing for the nonlinear Hartree equation under the Penrose-Lindhard stability condition. It follows from the resolvent equation \eqref{eq:resolvent-intro} that
\[
	\widehat{\rho}_{k}(t) = \int_{0}^{t} \widehat{G}_{k}(t-s) \widehat{S}_{k}(s) \, ds.
\]
Unlike the Coulomb case \cite{Nguyen2025}, the Penrose type condition ensures that the Green function in Fourier space does not have an oscillatory component, and the regular part exhibits rapid decay in $\langle kt \rangle$, see Proposition \ref{prop:Green}. Combining this with phase mixing estimates for the source term, we are able to close the bootstrap argument, see Section \ref{sec:nonlinear}. The choice of coordinates $\partial_k - \partial_p$ is crucial in the nonlinear estimate, noting that $(\partial_k - \partial_p) \widehat{V}(k+p) =0$ for $V= w \star_x \rho_{\gamma}$.

Finally, using the phase mixing estimates for $\rho_{\gamma}(t)$, we give an alternative proof of scattering of the solution $\gamma(t)$. This also demonstrates that the large time behavior of $\gamma(t)$ is well-approximated by the free Hartree dynamics.

\subsection{Equilibria and interaction potentials}\label{subsec:eq}
In this paper, $f = f(-\Delta)$ is a translation-invariant equilibrium of the Hartree equation with finite constant density $\rho_{f} =\int_{\mathbb{R}^d} f(|p|^2) \, dp$ and $w$ is the two body interaction potential. We assume:

\begin{enumerate}
	\labitem{\textbf{(H1)}}{equi:nonneg} $f(|p|^2)>0$ for $|p| <\Upsilon$, where $\Upsilon$ is defined as
	$
		\Upsilon = \sup \{ |p|: \,   f(|p|^2) \neq 0  \}.
	$
	\labitem{\textbf{(H2)}}{equi:reg} $f$ is of class $C^{n_0}$ where $n_0 \ge d+3$.
	\labitem{\textbf{(H3)}}{equi:decay} $| \partial_e^{n} f(e)| \lesssim \langle e \rangle^{-n_1 - n}$ for all $e \ge 0$ and $0 \le n \le n_0$, where $n_1 \ge \frac{d+1}{2}$.
	\labitem{\textbf{(H4)}}{assumption:potential} $w$ is a finite positive Borel measure on $\mathbb{R}^d$.
	\labitem{\textbf{(H5)}}{assumption:potential-fourier} $\widehat{w}$ is radial and satisfies $\widehat{w}(k) \ge 0$, $\widehat{w}(0) <\infty$, and $\widehat{w}'(k) \le 0$.
	\labitem{\textbf{(H6)}}{assumption:stability}
	$f$ and $w$ satisfy 
	\begin{equation*}
		1 - \frac{\widehat{w}(0)}{2} \int_{\{ |u| < \Upsilon \}} \frac{\varphi(u)}{(\Upsilon - u)^2} \, du > 0,
	\end{equation*}
	where $\varphi(u) = \int_{\RR^{d-1}} f(u^2 +|v'|^2) \, dv'$ is the marginal of $f$.
\end{enumerate}

Assumptions \ref{equi:nonneg}--\ref{equi:decay} imply that the symbol $f$ is sufficiently smooth and decays rapidly in $e$. Typical examples satisfying \ref{assumption:potential} and \ref{assumption:potential-fourier} include the screened Coulomb potential $\widehat{w}(k) = (1+|k|^2)^{-1}$ and the Dirac delta potential $\widehat{w}(k) =1$.
The properties of marginals are given in Lemma \ref{lem:varphi}. The condition \ref{assumption:stability} serves as a criterion for strong linear stability, see Theorem \ref{thm:Penrose-Lindhard}. Note that this is vacuously true when $\Upsilon = \infty$. We did not pursue optimal assumptions on the regularity and decay of $f$ in order to maintain simplicity.

\subsection{Main result}
Now we are ready to state the main theorem.

\begin{theorem}\label{thm:main}
	Let $f= f(-\Delta)$ be the equilibrium and $w$ be the interaction potential described as in Section \ref{subsec:eq}. Let $d+1 \le N_1 \le  n_0 -2$,  $N_2 \ge d+1$, and $N_3 = \min\{ N_1, N_2 \} - d -1$. Let $\gamma_0$ be the initial density operator satisfying
	\begin{equation}\label{assumption:initial}
		\varepsilon:= \sum_{|\alpha| \le N_1}  \left\| \langle x \rangle^{\lfloor d/2 \rfloor +1} \langle \nabla \rangle^{N_2 + \lfloor d/2 \rfloor +1} \operatorname{ad}_x^{\alpha}(\gamma_0) \langle \nabla \rangle^{N_2 + \lfloor d/2 \rfloor +1} \langle x \rangle^{\lfloor d/2 \rfloor +1}  \right\|_{\mathcal{HS}}  <\infty,
	\end{equation}
	where $\operatorname{ad}_x(A) = [x,A]$ and $\| A \|_{\mathcal{HS}} = \| A \|_{L^2_{x,y}}$.
	Then for sufficiently small $\varepsilon$, there exists a unique global-in-time solution $\gamma(t)$ to \eqref{Hartree-perturbation}. The associated density $\rho_{\gamma}(t)$, defined in \eqref{def:density}, satisfies the phase mixing estimates
	\begin{equation}\label{est:dens-physical}
		\| \partial_x^{n} \rho_{\gamma}(t) \|_{L^{\infty}_{x}} \lesssim \varepsilon \langle t \rangle^{-d- n}
	\end{equation}
	for $0 \le n \le  N_3$. Moreover, there is a unique operator $\gamma_{\infty} \in \mathcal{HS}$ satisfying
	\[
		\| e^{-it\Delta} \gamma(t) e^{it\Delta} - \gamma_{\infty} \|_{\mathcal{HS}} \lesssim \varepsilon \langle t \rangle^{-d/2}.
	\]
\end{theorem}

Theorem \ref{thm:main} establishes the optimal decay rate of the density $\rho(t)$ associated with solution $\gamma(t)$ in the sense that it exhibits the same decay rate as that given by the free Hartree dynamics $\rho^{0}(t) = \rho_{e^{it\Delta} \gamma_0 e^{-it\Delta}}(t)$, with an additional $t^{-1}$ decay for each spatial derivative. For details on the study of free Hartree dynamics, see Section \ref{sec:freeHartree}. Theorem \ref{thm:main} also proves the scattering of $\gamma(t)$ in the Hilbert-Schmidt space. We remark that scattering in Schatten spaces was first established in \cite{Lewin2014} and later extended in \cite{Chen2018, Collot2022, Hadama2025b}.

We will work on the Fourier side to obtain the phase mixing estimates. Namely, we aim to prove
\begin{equation}\label{est:dens-Fourier}
	\left| \widehat{\rho}_{k}(t) \right| \lesssim \varepsilon \langle kt \rangle^{-N_1} \langle k \rangle^{-N_2}.
\end{equation}
The nonlinear iterative scheme involves $L^{\infty}$ based norms in Fourier space, see Section \ref{sec:nonlinear} for details. We remark that the choice of the coordinates $\partial_p^{\alpha} \widehat{\mu}(k-p,p)$ in the bootstrap norm is natural in view of the proof of phase mixing for free Hartree and also helpful for nonlinear estimates, noting that $\partial_p \widehat{V}(k) =0$ for $V = w \star_x \rho$, see Section \ref{subsec:nonlinear-mu} for details.  Moreover, while initial perturbation is assumed to be small and localized in a weighted Hilbert-Schmidt space, this can be relaxed by using a norm on the integral kernel. Namely, the theorem also holds if we replace \eqref{assumption:initial} with 
\begin{equation}\label{assumption:initial2}
	\varepsilon:= \sum_{|\alpha| \le N_1}  \left\| \langle k \rangle^{N_2} \partial_p^{\alpha} \widehat{\gamma_0}(k-p,p) \right\|_{L^{\infty}_k L^1_p} <\infty.
\end{equation}
We proceed with this assumption as such the norm is sufficient for the analysis in this paper.

\subsection{Outline of the paper}
In Section \ref{sec:freeHartree}, we study the decay of the density associated with the free Hartree dynamics. In Section \ref{sec:linear}, we derive the resolvent equation and analyze the linearized operator using Fourier-Laplace analysis. We provide an equivalent condition for spectral stability based on the marginal of the equilibrium. Next, under the spectral stability condition \eqref{thm:Penrose-Lindhard}, we study the Green function associated with $\frac{1}{D(\lambda, k)}$ and establish the pointwise decay in Fourier space. In Section \ref{sec:nonlinear}, we state the bootstrap proposition and prove the nonlinear estimates. In Section \ref{sec:mainthm}, we collect all the results to obtain the phase mixing estimates \eqref{est:dens-physical} and \eqref{est:dens-Fourier} for the density $\rho_{\gamma}(t)$, and prove that the solution to \eqref{Hartree-perturbation} scatters in the Hilbert-Schmidt space.

\subsection{Notations}

We use the Japanese bracket notation $\langle x \rangle = (1+|x|^2)^{1/2}$, and $\langle x,y \rangle = (1+|x|^2+|y|^2)^{1/2}$.
The Laplace transform is given by
\[
	\mathcal{L}[F](\lambda) = \int_{0}^{\infty} e^{-\lambda t} F(t) dt.
\]
The spatial Fourier transform on $\mathbb{R}^d$ is denoted by
\[
	\widehat{g}_{k} = \widehat{g}(k) = \int_{\mathbb{R}^d} e^{-i k \cdot x} g(x) \, dx
\]
The Fourier-Laplace transform is denoted by
\[
	\widetilde{g}(\lambda, k) = \mathcal{L}[\widehat{g}(k)](\lambda) = \int_{0}^{\infty} e^{-\lambda t} \int_{\mathbb{R}^d} e^{-ik \cdot x } g(t, x) \, dx dt.
\]
The commutator of two operators is given by $[A,B] = AB - BA$.
We denote the density function of an operator $A(t)$ as $\rho_{A}(t)$ or $\rho[A](t)$.

\section{Free Hartree dynamics}\label{sec:freeHartree}
In this section, we review the phase mixing estimates for the free Hartree dynamics \begin{equation}
	\label{freeHartree}
	\begin{cases}
		\begin{aligned}
			&i \partial_{t} \gamma^{0} = [ - \Delta , \, \gamma^{0} ] \\
			&\gamma^{0}_{\vert_{t=0}} = \gamma_{0}
		\end{aligned}
	\end{cases}
\end{equation}
given in \cite[Proposition~4.1]{Nguyen2025}. Observe that $\gamma^{0}(t) = e^{it\Delta} \gamma_0 e^{-it\Delta}$. Denote its density by $\rho^{0}(t) = \rho[\gamma^{0}](t)$.
\begin{proposition}\label{prop:freeHartree}
	Let $\gamma_0$ be the initial density operator whose integral kernel $\gamma_0 (x,y)$ satisfies
	\[
		\sum_{|\alpha| \le N_1}  \left\| \langle k \rangle^{N_2} \partial_p^{\alpha} \widehat{\gamma_0}(k-p,p) \right\|_{L^{\infty}_k L^1_p} <\infty
	\]
	for $N_1 , N_2 \ge d+1$.
	Then the density $\rho^{0}(t)$ associated with \eqref{freeHartree} satisfies
	\[
		\left|  \widehat{\rho}^{0}(t,k) \right|  
		\lesssim \langle kt \rangle^{-N_1} \langle k \rangle^{-N_2}
	\]
	for all $k \in \mathbb{R}^d$. It follows that
	\[
		\| \partial_x^{n} \rho^{0}(t) \|_{L^{\infty}_{x}} \lesssim \langle t \rangle^{-d-n}
	\]
	for $0\le n \le \min\{ N_1, N_2 \} - d-1$.
\end{proposition}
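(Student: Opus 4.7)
The plan is to work in Fourier space, where the free density admits an explicit oscillatory integral representation. Starting from $\gamma^{0}(t)=e^{it\Delta}\gamma_{0}e^{-it\Delta}$, the spatial Fourier transform of its integral kernel is $\widehat{\gamma^{0}(t)}(\xi,\eta)=e^{-it|\xi|^{2}+it|\eta|^{2}}\widehat{\gamma_{0}}(\xi,\eta)$, and taking the diagonal Fourier yields
\[
\widehat{\rho^{0}}(t,k)=\frac{e^{it|k|^{2}}}{(2\pi)^{d}}\int_{\mathbb{R}^{d}}e^{-2itk\cdot\xi}\,\widehat{\gamma_{0}}(\xi,k-\xi)\,d\xi,
\]
using $|k-\xi|^{2}-|\xi|^{2}=|k|^{2}-2k\cdot\xi$. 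The assumption $\|\langle x,y\rangle^{N_{1}}\langle\nabla_{x,y}\rangle^{N_{2}}\gamma_{0}\|_{L^{1}_{x,y}}<\infty$ translates via Fourier to $|\partial^{\alpha}_{(\xi,\eta)}\widehat{\gamma_{0}}(\xi,\eta)|\lesssim\langle\xi,\eta\rangle^{-N_{2}}$ for every $|\alpha|\le N_{1}$, which is the input for both integration by parts and pointwise estimates.

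For the $\langle kt\rangle^{-N_{1}}$ factor I would integrate by parts $N_{1}$ times in $\xi$ along the unit direction $\hat{k}:=k/|k|$, exploiting $(\hat{k}\cdot\nabla_{\xi})e^{-2itk\cdot\xi}=-2it|k|\,e^{-2itk\cdot\xi}$. Each integration by parts produces a factor $(2t|k|)^{-1}$ and transfers a directional derivative onto $\widehat{\gamma_{0}}(\xi,k-\xi)$; by the chain rule the result is a linear combination of $\partial^{\beta}\widehat{\gamma_{0}}$ with $|\beta|\le N_{1}$ evaluated at $(\xi,k-\xi)$, still pointwise bounded by $\langle\xi,k-\xi\rangle^{-N_{2}}$. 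For $|kt|\lesssim 1$ no integration by parts is performed and the pointwise bound suffices.

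For the $\langle k\rangle^{-N_{2}}$ factor the key observation is the parallelogram identity $|\xi|^{2}+|k-\xi|^{2}=2|\xi-k/2|^{2}+|k|^{2}/2$, which yields $\langle\xi,k-\xi\rangle\gtrsim\langle k\rangle$ uniformly in $\xi$. After the shift $\xi\mapsto\xi+k/2$ and rescaling, the $\xi$-integral involving $\langle\xi,k-\xi\rangle^{-N_{2}}$ isolates a factor of $\langle k\rangle^{-N_{2}}$ (the residual integral converges since $N_{2}\ge d+1>d$), which combined with the previous step delivers $|\widehat{\rho^{0}}(t,k)|\lesssim\langle kt\rangle^{-N_{1}}\langle k\rangle^{-N_{2}}$.

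Finally, the physical estimate follows by Fourier inversion, $\|\partial_{x}^{n}\rho^{0}(t)\|_{L^{\infty}_{x}}\lesssim\int|k|^{n}|\widehat{\rho^{0}}(t,k)|\,dk$. Substituting $k=u/t$ for $t\ge 1$ rewrites this as $t^{-d-n}\int|u|^{n}\langle u\rangle^{-N_{1}}\langle u/t\rangle^{-N_{2}}\,du$; the $\langle u\rangle^{-N_{1}}$ factor handles the region $|u|\lesssim t$ and requires $n\le N_{1}-d-1$, while the $\langle u/t\rangle^{-N_{2}}$ factor governs $|u|\gg t$ (equivalently, controls the high-frequency tail of $\widehat{\rho^{0}}$ uniformly in $t$) and requires $n\le N_{2}-d-1$, together yielding $\langle t\rangle^{-d-n}$ for $n\le N_{3}$. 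The main technical obstacle is the chain-rule bookkeeping in the integration-by-parts step and, above all, the careful extraction of the $\langle k\rangle^{-N_{2}}$ decay from the slice integral of $\langle\xi,k-\xi\rangle^{-N_{2}}$, since naively integrating in $\xi$ only produces $\langle k\rangle^{d-N_{2}}$ and one must exploit the full $N_{1}$-th order smoothness of $\widehat{\gamma_{0}}$ to recover the stronger pointwise bound.
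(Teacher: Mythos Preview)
Your approach is essentially the paper's: represent $\widehat{\rho^{0}}(t,k)$ as an oscillatory integral via the free evolution of the kernel, integrate by parts along $\hat k$ to gain $\langle kt\rangle^{-N_1}$, then integrate $|k|^{n}\langle kt\rangle^{-N_1}\langle k\rangle^{-N_2}$ over $k$ for the physical-space decay. The paper does exactly this after the substitution $v=k-2p$ (your $v=k-2\xi$), which makes the integration by parts marginally cleaner because $\partial_v$ acts only on $\widehat{\gamma_0}\big(\tfrac{k+v}{2},\tfrac{k-v}{2}\big)$.

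The one place your write-up has a gap is the extraction of the $\langle k\rangle^{-N_2}$ factor. You first claim that the parallelogram identity together with shift-and-rescale ``isolates a factor of $\langle k\rangle^{-N_2}$'', but then in the last paragraph you (correctly) observe that this computation only yields $\langle k\rangle^{d-N_2}$; the proposed remedy---``exploit the full $N_1$-th order smoothness''---cannot help, since those derivatives are already consumed by the $\langle kt\rangle^{-N_1}$ integration by parts and contribute nothing at $t=0$. The paper's device here is different: rather than the parallelogram rescaling, it invokes the elementary inequality
\[
\frac{\langle k\rangle^{N_2}}{\langle k+v\rangle^{N_2}\langle k-v\rangle^{N_2}}\;\lesssim\;\frac{1}{\min\{\langle k+v\rangle,\langle k-v\rangle\}^{N_2}},
\]
equivalently $\langle k\rangle\lesssim\max\{\langle k+v\rangle,\langle k-v\rangle\}$, to strip off $\langle k\rangle^{-N_2}$ and leave the residual $\int\min\{\langle k+v\rangle,\langle k-v\rangle\}^{-N_2}\,dv$, which is bounded uniformly in $k$ because $N_2>d$. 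Apart from this step the two arguments coincide.
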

\begin{proof}
	Let $\gamma^{0}(t,x,y)$ be the integral kernel of $\gamma^{0}(t)$. Its Fourier transform can be computed as
	\[
		\widehat{\gamma}^{0}(t,k,p) = e^{-it(|k|^2 - |p|^2)} \widehat{\gamma_0}(k,p).
	\]
	Substituting $v= k-2p$, it follows that
	\begin{equation*}
		\begin{split}
			\widehat{\rho}^{0}(t,k) 
			&= \iiint e^{ix \cdot(k'+p-k)} \widehat{\gamma}(t,k',p) \, dx dk' dp 
			= \int \widehat{\gamma}^{0}(t,k-p,p) \, dp \\
			&= \int e^{-itk \cdot (k-2p)} \widehat{\gamma_0}(k-p,p) \, dp.
		\end{split}
	\end{equation*}
	Integrating by parts for $N_1$ times, we obtain that
	\[
		|\widehat{\rho}^{0}(t,k)|
		\lesssim \langle kt \rangle^{-N_1} \langle k \rangle^{-N_2}\sum_{|\alpha| \le N_1} \int \langle k \rangle^{N_2} |\partial_p^{\alpha} \widehat{\gamma_0} (k-p,p) | \, dp.
	\]
	In the physical space, we have
	\[
		\| \partial_x^{n} \rho^{0}(t) \|_{L^{\infty}_x} \lesssim \int_{\mathbb{R}^d} |k|^{n} \langle kt \rangle^{-m_1} \langle k \rangle^{-m_2} \, dk \lesssim \langle t \rangle^{-d -n},
	\]
	for $n \le \min \{ N_1, N_2\} - d-1$.
\end{proof}

\section{Linear estimates}\label{sec:linear}
\subsection{Dispersion relation}
In this section, we write the linearized operator as a Fourier-Laplace multiplier.
\begin{proposition}\label{lem:dispersion}
	For $k \in \mathbb{R}^d \setminus \{ 0 \}$ and $\Re \lambda >0$, we have
	\begin{equation}\label{eq:resolvent}
		\widetilde{\rho}(\lambda, k) = \frac{1}{D(\lambda, k)} \widetilde{S}(\lambda, k)
	\end{equation}
	where
	\begin{equation}\label{eq:dispersion}
		D(\lambda, k)= 1 + i \widehat{w}(k) \int_{\mathbb{R}^d} \frac{f(|p|^2) - f(|k-p|^2) }{\lambda - 2i k \cdot p + i |k|^2} \, dp
	\end{equation}
	and $\widetilde{S}(\lambda,k)$ is the Fourier-Laplace transform of $S(t,x)$, which is given as
	\[
	S(t,x) = \rho\left[ e^{it\Delta} \gamma_{0} e^{-it\Delta} -i \int_{0}^{t} e^{i(t-s)\Delta} [w \star_x \rho_{\gamma}, \gamma ](s) e^{i(s-t)\Delta} \, ds \right](x)
	\]
\end{proposition}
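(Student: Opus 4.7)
The plan is to derive a linear Volterra convolution equation for $\widehat{\rho}(t,k)$ in time via Duhamel's principle, then convert the convolution to multiplication by Laplace transform. First, since $f = f(-\Delta)$ commutes with $-\Delta$, the right-hand side of \eqref{Hartree-perturbation} splits bilinearly as $[-\Delta, \gamma] + [w \star_x \rho_\gamma, \gamma] + [w \star_x \rho_\gamma, f]$. Duhamel's formula against the free Schrödinger evolution then gives
\[
\gamma(t) = e^{it\Delta}\gamma_0 e^{-it\Delta} - i\int_0^t e^{i(t-s)\Delta}\bigl[w \star_x \rho_\gamma,\, \gamma + f\bigr](s)\, e^{-i(t-s)\Delta}\, ds.
\]
Taking densities and peeling off the $[w \star_x \rho_\gamma, f]$ contribution (the only term producing a Volterra-type linear feedback on $\rho$; the remaining pieces are absorbed into $S(t,x)$ as defined in the statement) reduces matters to computing the Fourier transform in $x$ of the peeled integrand.

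The core calculation is the identity
\[
\mathcal{F}_x\bigl\{\rho\bigl[e^{iT\Delta}[V, f(-\Delta)]\,e^{-iT\Delta}\bigr]\bigr\}(k) = \widehat{V}(k)\int_{\mathbb{R}^d} e^{iT(|k|^2 - 2k\cdot p)}\bigl[f(|k-p|^2) - f(|p|^2)\bigr]\, dp,
\]
with $V = w\star_x\rho_\gamma$ and $T = t-s$. I would carry this out in the momentum representation: the kernel of $[V, f(-\Delta)]$ equals $(V(x) - V(y))\,f^\vee(x-y)$ where $f^\vee$ is the inverse Fourier transform of $f(|\cdot|^2)$, and its $(p,q)$-Fourier transform factors as $[f(|q|^2) - f(|p|^2)]\,\widehat{V}(p+q)$; Schrödinger conjugation multiplies this by $e^{-iT|p|^2 + iT|q|^2}$; and taking the $(p, q = k-p)$-slice that produces $\widehat\rho(k)$, combined with $|k-p|^2 - |p|^2 = |k|^2 - 2k\cdot p$, delivers the displayed formula. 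Substituting $\widehat V(s, k) = \widehat w(k)\,\widehat\rho(s, k)$ then converts the density equation into a linear Volterra convolution in $t$.

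Finally, I would Laplace-transform in $t$ on $\{\Re\lambda > 0\}$. Convolution becomes product, yielding $\widetilde{\rho}(\lambda,k)\bigl[1 + i\widehat{w}(k)\widetilde{K}(\lambda,k)\bigr] = \widetilde{S}(\lambda,k)$, where $K(T,k)$ denotes the $p$-integral above. Exchanging the order of integration and performing the elementary $T$-integral, followed by the change of variables $p \mapsto k-p$, identifies $\widetilde{K}(\lambda, k)$ with the integral defining $D(\lambda, k)$; the identity then extends from $\Re\lambda > 0$ to general $\lambda \in \mathbb{C}$ (for $k \ne 0$) by analytic continuation. The main obstacle is the Wigner-type Fourier computation in the preceding paragraph: careful bookkeeping of $(2\pi)$-normalizations and of the order of the two Schrödinger conjugations is the only place where a sign or normalization error can creep in. Convergence of the $p$- and $T$-integrals follows immediately from the decay and regularity assumptions \ref{equi:reg}--\ref{equi:decay} on $f$ together with $\Re\lambda > 0$.
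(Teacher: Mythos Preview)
Your proposal is correct and follows essentially the same approach as the paper. The only difference is organizational: the paper takes the Fourier transform of the kernel equation, then the Laplace transform, then divides by $\lambda+i(|k-p|^2-|p|^2)$ and integrates in $p$, whereas you first pass through Duhamel to obtain a time-Volterra equation for $\widehat{\rho}(t,k)$ and only then Laplace-transform; these are the same computation in a different order, and your Wigner-type identity for $\mathcal{F}_x\{\rho[e^{iT\Delta}[V,f]e^{-iT\Delta}]\}(k)$ is exactly what the paper obtains (up to the sign convention you already flagged).
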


\begin{proof}[Proof of Proposition \ref{lem:dispersion}]
	Let $\gamma(t,x,y)$ be the integral kernel of $\gamma(t)$. Writing \eqref{Hartree-perturbation} in terms of the integral kernel, it follows that
	\begin{equation}\label{eq:kernel}
		i\partial_t \gamma(t,x,y) = (-\Delta_x + \Delta_y) \gamma(t,x,y) + (V(t,x) - V(t,y)) (f(x,y) + \gamma(t,x,y))
	\end{equation}
	where $V= w \star \rho_{\gamma}$. Note that the integral kernel of $f=f(-\Delta)$ can be written as
	\[
	f(x,y) = \int e^{i(x-y) \cdot k'} f(|k'|^2) \, dk' .
	\]
	Its Fourier transform can be computed as
	\begin{equation*}
		\begin{split}
			\widehat{f}(k,p) 
			= \iiint e^{-ik\cdot x -ip \cdot y } e^{i(x-y) \cdot k'} f(|k'|^2) \, dk' dx dy
			= f(|k|^2) \delta_{p=-k}.
		\end{split}
	\end{equation*}
	Taking the spatial Fourier transform to \eqref{eq:kernel}, we get
	\begin{multline*}
		i\partial_t \widehat{\gamma}(t,k,p) = (|k|^2 - |p|^2) \widehat{\gamma}(t,k,p) + \widehat{V}(t,k+p) \left(f(|p|^2) - f(|k|^2)\right) \\
		+ \int \widehat{V}(t,\ell) \left( \widehat{\gamma}(t,k- \ell,p) - \widehat{\gamma}(t,k,p- \ell) \right) d\ell.
	\end{multline*}
	Then take the temporal Laplace transform to obtain
	\begin{multline*}
		(\lambda + i (|k|^2 - |p|^2)) \widetilde{\gamma}(\lambda ,k,p) 
		= \widehat{\gamma_0}(k,p) 
		- i \widetilde{V}(\lambda,k+p) \left( f(|p|^2) - f(|k|^2) \right) \\
		- i \mathcal{L}\left[ \int \widehat{V}(t,\ell) \left( \widehat{\gamma}(t,k-\ell,p) - \widehat{\gamma}(t,k,p-\ell) \right)  d\ell \right](\lambda)
	\end{multline*}
	Noting that
	\begin{equation*}
		\begin{split}
			\widetilde{\rho}(\lambda,k) 
			= \iiint e^{ix \cdot (k' +p -k )} \widetilde{\gamma}(\lambda, k', p) \, dx dk' dp 
			= \int \widetilde{\gamma}(\lambda,k-p,p) \, dp,
		\end{split}
	\end{equation*}
	we write
	\begin{multline*}
		(\lambda + i (|k-p|^2 - |p|^2)) \widetilde{\gamma}(t,k-p,p) 
		= \widehat{\gamma_0}(k-p,p) 
		- i \widehat{w}(k) \widetilde{\rho}(\lambda,k) \left( f(|p|^2) - f(|k-p|^2) \right) \\
		- i \mathcal{L}\left[ \int \widehat{V}(t,\ell) \left( \widehat{\gamma}(t,k-p-\ell,p) - \widehat{\gamma}(t,k-p,p-\ell) \right) d\ell \right](\lambda)
	\end{multline*}
	Dividing both sides by $\lambda +i(|k-p|^2 - |p|^2)$ and integrating with respect to $p$, we get 
	\[
	\widetilde{\rho}(\lambda, k) = \widetilde{S}(\lambda, k) - i \widehat{w}(k) \widetilde{\rho}(\lambda, k) \int \frac{f(|p|^2) - f(|k-p|^2)}{\lambda - 2i k \cdot p + i |k|^2} \, dp.
	\]
	Here $\widetilde{S}(\lambda, k)$ is given by
	\[
	\widetilde{S}(\lambda, k) = \int \frac{\widehat{\gamma_0} (k-p, p)}{\lambda - 2i k \cdot p + i|k|^2} \, dp - i \int \frac{\mathcal{L}[\widehat{F}(k-p, p) ](\lambda)}{\lambda - 2i k \cdot p + i |k|^2} \, dp,
	\]
	where $\widehat{F}$ is defined to be
	\[
	\widehat{F}(t,k,p) = \int \widehat{w}(\ell) \widehat{\rho}(t,\ell) (\widehat{\gamma}(t, k-\ell, p) - \widehat{\gamma}(t, k, p-\ell) ) d\ell.
	\]
	A direct computation shows that
	\[
	\int \frac{\widehat{\gamma_0} (k-p, p)}{\lambda - 2i k \cdot p + i|k|^2} \, dp
	= \int_{0}^{\infty} e^{-\lambda t} \int e^{-it(|k-p|^2 -|p|^2)} \widehat{\gamma_0}(k-p, p) \, dp dt 
	\]
	and
	\begin{equation*}
		\begin{split}
			\int \frac{\mathcal{L}[\widehat{F}(k-p,p)](\lambda)}{\lambda - 2i k \cdot p + i |k|^2} \, dp 
			= \int_{0}^{\infty} e^{-\lambda t} \int_{0}^{t} \! \int e^{-i(t-s)(|k-p|^2-|p|^2)} \widehat{F}(s,k-p, p) \, dp ds dt
		\end{split}
	\end{equation*}
	hold. It follows that
	\[
	\widetilde{S}(\lambda, k) = \int \widetilde{\gamma}^{S} (\lambda, k-p, p) \, dp
	\]
	in which $\gamma^{S}(t)$ is defined by
	\[
	\gamma^{S}(t) =  e^{it\Delta} \gamma_{0} e^{-it\Delta} -i \int_{0}^{t} e^{i(t-s)\Delta} [w \star \rho_{\gamma}, \gamma ](s) e^{i(s-t)\Delta} \, ds.
	\]
	This concludes the proof of \eqref{eq:resolvent} and \eqref{eq:dispersion}.
\end{proof}

%%% Penrose-Lindhard stability
\subsection{Spectral stability}
In this section, we prove that the dispersion relation $D(\lambda, k)$ is bounded away from zero on $\{ \Re \lambda  \ge 0 \}$ for certain equilibria.
\begin{theorem}\label{thm:Penrose-Lindhard}
	Let $f= f(-\Delta)$ be the equilibrium satisfying \ref{equi:nonneg} and $\langle \cdot \rangle \widehat{\varphi} \in L^1$. Let $w$ be the interaction potential satisfying \ref{assumption:potential} and \ref{assumption:potential-fourier}. Then
\begin{equation}\label{eq:Penrose-Lindhard}
	\inf_{k \in \mathbb{R}^d \setminus \{ 0\}} \inf_{\Re \lambda \ge 0} | D(\lambda, k)| \ge \theta_{0}
\end{equation}
for some $\theta_{0} >0$ if and only if \ref{assumption:stability} holds.
\end{theorem}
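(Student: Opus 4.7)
The overall strategy is a Penrose-type analysis: reduce $D(\lambda, k)$ to a one-dimensional integral in the marginal $\varphi$, rule out zeros in the open right half-plane, and on the imaginary axis identify the critical configuration $|k| \to 0$, $\tau = 2|k|\Upsilon + |k|^2$, where $\inf |D|$ is attained and coincides with the left-hand side of \eqref{assumption:stability}.

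First, I substitute $p = q + k/2$ in \eqref{eq:dispersion} to eliminate the $i|k|^2$ term, then decompose $q = u \hat k + q_\perp$ with $\hat k = k/|k|$ and integrate out $q_\perp$ using the definition of $\varphi$ to obtain
\[
	D(\lambda, k) = 1 + i\widehat{w}(k) \int_{\mathbb{R}} \frac{\varphi(u + |k|/2) - \varphi(u - |k|/2)}{\lambda - 2i|k|u}\,du.
\]
To rule out zeros in $\{\Re \lambda > 0\}$, I write $\lambda = \alpha + i\tau$ with $\alpha > 0$ and separate real and imaginary parts. Using evenness of $\varphi$ and its monotone decrease in $|u|$ on $[0,\Upsilon]$ (Lemma \ref{lem:varphi}), the integrand $u(\varphi(u+|k|/2) - \varphi(u-|k|/2))$ is everywhere $\le 0$, which together with the weight asymmetry forces $\operatorname{sgn}(\Im D(\lambda,k)) = -\operatorname{sgn}(\tau)$ for $\tau \ne 0$, while at $\tau = 0$ a parallel calculation yields $\Re D(\alpha, k) \ge 1$.

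Second, on the imaginary axis $\lambda = i\tau$, Plemelj gives
\[
	\Re D(i\tau, k) = 1 + \widehat{w}(k) P_k(\tau), \qquad \Im D(i\tau, k) = \frac{\pi \widehat{w}(k)}{2|k|}\bigl[\varphi(\mu + |k|/2) - \varphi(\mu - |k|/2)\bigr],
\]
with $\mu = \tau/(2|k|)$ and $P_k(\tau)$ the principal value. By the compact support of $\varphi$, $\Im D(i\tau,k) = 0$ iff $\tau = 0$ (where $D = 1$) or $|\tau| \ge \tau_*(k) := 2|k|\Upsilon + |k|^2$; on the latter range the principal value is a proper integral, and shifts $u \mapsto v \mp |k|/2$ followed by combining fractions give
\[
	P_k(\tau) = -2|k|^2 \int_{|v|<\Upsilon} \frac{\varphi(v)}{(\tau + |k|^2 - 2|k|v)(\tau - |k|^2 - 2|k|v)}\,dv,
\]
which is strictly negative, monotone increasing in $\tau \in [\tau_*(k),\infty)$, and vanishes as $\tau \to \infty$. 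At $\tau = \tau_*(k)$ this reduces to $\Re D(i\tau_*(k), k) = 1 - \tfrac{1}{2}\widehat{w}(k)\int_{|v|<\Upsilon} \varphi(v)/[(\Upsilon+|k|-v)(\Upsilon - v)]\,dv$; using $\widehat{w}'\le 0$ this is monotone nondecreasing in $|k|$ and tends, as $|k| \to 0^+$, precisely to the left-hand side of \eqref{assumption:stability}.

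The theorem then follows. For ($\Leftarrow$): combine non-vanishing in $\{\Re\lambda > 0\}$, the asymptotic $D \to 1$ as $|\lambda| \to \infty$ (uniformly in $k$ by dominated convergence), continuity and compactness for $|k|$ bounded away from $0$ in a bounded $\tau$-window, and the explicit lower bound $\Re D(i\tau, k) \ge \Re D(i\tau_*(k), k) \ge $ LHS of \eqref{assumption:stability} on the remaining zero set of $\Im D$. For ($\Rightarrow$): if the LHS is $\le 0$, monotonicity of $\Re D(i\tau, k)$ in $\tau$ on $[\tau_*(k), \infty)$ together with its positive asymptote yields, via the intermediate value theorem and continuity in $|k|$, either an actual zero of $D$ or a sequence along which $|D| \to 0$, so no positive lower bound can hold. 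The main obstacle will be uniform control of the improper integral $\int \varphi(v)/[(\Upsilon - v)(\Upsilon + |k| - v)]\,dv$ as $|k|\to 0$: it carries an integrable singularity at $v = \Upsilon$ whose uniform passage to the limit requires quantitative regularity and decay of $\varphi$ near the boundary of its support, furnished by Lemma \ref{lem:varphi}.
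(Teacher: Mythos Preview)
Your approach is essentially the same Penrose-type analysis as the paper's: reduce to the one-dimensional marginal, rule out zeros in the open half-plane by sign considerations, use Plemelj on the imaginary axis, and identify the critical value of $\Re D$ at the edge $\tau = \tau_*(k)$ as the left-hand side of \eqref{assumption:stability} in the limit $|k|\to 0$. The monotonicity-in-$|k|$ and monotonicity-in-$\tau$ arguments you sketch are exactly those of the paper's Proposition~A.5.

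There is, however, one genuine gap in the $(\Leftarrow)$ direction: your compactness step is stated only ``for $|k|$ bounded away from $0$,'' and for small $|k|$ you invoke only the explicit lower bound on $\Re D$ \emph{on the zero set of $\Im D$}, i.e.\ on $\{|\tau|\ge \tau_*(k)\}\cup\{0\}$. This leaves the strip $0<|\tau|<\tau_*(k)$ with nothing but the qualitative statement $\Im D\ne 0$, which gives no uniform-in-$k$ lower bound as $|k|\to 0$ (note $|\Im D(i\tau,k)|\approx \tfrac{\pi}{2}\widehat w(0)|\varphi'(\mu)|$ with $\mu=\tau/(2|k|)$, and $\varphi'(\mu)\to 0$ as $\mu\to 0^+$ or $\mu\to\Upsilon^-$). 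Relatedly, your asymptotic ``$D\to 1$ as $|\lambda|\to\infty$ uniformly in $k$'' is not a consequence of dominated convergence alone; on the imaginary axis it is a Riemann--Lebesgue statement whose uniformity in $k$ is exactly the issue.

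The paper resolves this by rescaling $\tilde\lambda:=\lambda/|k|$ and showing that $\widetilde D(\tilde\lambda,k):=D(\lambda,k)$ extends \emph{continuously} to $k=0$, with $\widetilde D(\tilde\lambda,0)=1+\tfrac{\widehat w(0)}{2}\int \varphi'(u)/(-i\tilde\lambda/2-u)\,du$; one then runs the entire analysis (no interior zeros, no imaginary zeros with $|\tilde\tau|<2\Upsilon+|k|$, the monotonicity for $|\tilde\tau|\ge 2\Upsilon+|k|$) for $\widetilde D$ including at $k=0$, and compactness in the $(\tilde\lambda,k)$ variables closes the argument cleanly. Your formulas already contain this rescaling implicitly via $\mu=\tau/(2|k|)$, so the fix is to make it explicit. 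Two smaller points: at $\tau=0$ one has $D(0,k)\ge 1$, not $D=1$; and ``compact support of $\varphi$'' presumes $\Upsilon<\infty$, so the case $\Upsilon=\infty$ (where \eqref{assumption:stability} is vacuous and the region $|\tau|\ge\tau_*(k)$ is empty) should be noted separately.
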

\begin{remark} This implies the invertibility of the linearized operator in $L^2_{t,x}$, which was crucial in \cite{Lewin2015} to obtain the linear stability.
\end{remark}

The Fourier-Laplace analysis for the Coulomb potential case was already done in \cite{Nguyen2025}. Unlike the Coulomb case, our interaction potential is short-range, so $D(\lambda, k)$ is expected to be bounded away from zero for a certain class of equilibrium. The case when $|k| \gtrsim 1$ can be proved in a similar way as in \cite{Nguyen2025}.
However, such an argument cannot be extended to $|k| \ll 1$ because $D(\lambda, k)$ does not have a continuous extension to $\{ \Re \lambda =0 , \ k =0 \}$. Instead, we regard the dispersion relation as a function of $\tilde{\lambda} := \lambda / |k|$ and $k$, which allows us to extend it continuously to $\{ \Re \tilde{\lambda}=0, \  k =0 \}$. The complete proof is given in Appendix \ref{sec:Penrose-Lindhard}. We emphasize that \ref{assumption:stability} plays an important role in spectral stability. Any compactly supported equilibria without \ref{assumption:stability} will have pure imaginary zeros, see Proposition \ref{prop:nozero3}.

\begin{remark}\label{rmk:fermisea}
	For the case of a Fermi gas at zero temperature, i.e., $f(-\Delta) = \chi_{\{-\Delta \le 1 \}}$, the marginal is evaluated as $\varphi(u) = c_d (1 - u^2)^{(d-1)/2} \chi_{\{ |u| \le 1 \}}$ for which $c_d$ is the measure of unit ball in $\mathbb{R}^{d-1}$. Observe that \ref{assumption:stability} does not hold when $d \le 3$. From the argument in Proposition \ref{prop:nozero3}, oscillatory modes exist and the strong stability condition \eqref{eq:Penrose-Lindhard} does not hold for this equilibrium in $d\le 3$. This is compatible with the previous result in \cite{Hadama2023}, as we are in the defocusing case. The asymptotic stability of such equilibria in the defocusing case is beyond the scope of this paper. 
\end{remark}

%%% Green function in Fourier space
\subsection{Green function in Fourier space}
In this section, we provide a pointwise estimate of the Green function in Fourier space. We define
\[
	\widetilde{G}(\lambda, k) := \frac{1}{D(\lambda, k)}.
\]
Taking the inverse Laplace transform of \eqref{eq:resolvent}, we get
\[
	\widehat{\rho}_{k}(t) = \int_{0}^{t} \widehat{G}_{k}(t-s) \widehat{S}_{k}(s) \, ds.
\]

\begin{proposition}\label{prop:Green}
	Let $f= f(-\Delta)$ be the equilibrium and $w$ be the interaction potential described as in Section \ref{subsec:eq}. The Green function $\widehat{G}_{k}(t)$ in Fourier space can be written as
	\begin{equation}\label{decomp:Green}
		\widehat{G}_{k}(t) = \delta(t) + \widehat{G}_{k}^{r}(t)
	\end{equation}
	where $\delta(t)$ is the Dirac delta distribution, and the regular part  $\widehat{G}_{k}^{r}(t)$ satisfies
	\begin{equation}
		\label{est:Green-Fourier}
		|\widehat{G}_{k}^{r}(t)  | \le C_{N_0}  |k| \langle kt \rangle^{-\widetilde{N}_{0}},
	\end{equation}
	for all $t \ge 0$, and for some constant $C_{N_0}$ depending only on $N_0$, where $\widetilde{N_0} := N_0 -3$.
\end{proposition}

\begin{proof}
	Recalling \eqref{eq:disp-varphi}, it follows that
	\[
		\widetilde{G}(\lambda, k) = \frac{1}{D(\lambda, k)} = 1 - \frac{\widehat{w}(k) m_{f}(\lambda, k)}{1 + \widehat{w}(k) m_{f}(\lambda, k)}
	\]
	where $m_f (\lambda, k)$ is defined as
	\[
		m_{f}(\lambda, k) = 2 \int_{0}^{\infty} e^{-\lambda t} \sin(t|k|^2) \widehat{\varphi}(2t|k|) \, dt.
	\]
	Note that $|\sin (t|k|^2) | \le \min \{ 1, t|k|^2 \}$ and $ \widehat{w}(k) \lesssim 1$.
	We first estimate $m_{f}(\lambda, k)$. Using the regularity of $\varphi$, we obtain that for $\Re \lambda \ge 0$,
	\[
		| m_{f}(\lambda, k)| \le C_{0} \int_{0}^{\infty} \min \{1, t|k|^2 \} \langle kt \rangle^{-N_{0}} \, dt \le C_0 \langle k \rangle^{-1}
	\]	
	for some absolute constant $C_{0}$. For $\Re \lambda =0$, we need to improve this estimate to get the time decay. Write $\lambda = i \tau$ with $\tau \in \mathbb{R}$. Integrating by parts in $t$ twice, we get
	\begin{align*}
		(|k|^2 \langle k \rangle^2 + \tau^2) m_{f}(i\tau, k )
		&= 2 \int_{0}^{\infty} (|k|^2 \langle k \rangle^2 - \partial_{t}^2) e^{-i \tau t} \sin (t|k|^2) \widehat{\varphi}(2t|k|) \, dt \\
		&= 2 \int_{0}^{\infty} e^{-i\tau t} (|k|^2 \langle k \rangle^2- \partial_{t}^2) (\sin (t|k|^2) \widehat{\varphi}(2t|k|)) \, dt - 2|k|^2 \widehat{\varphi}(0),
	\end{align*}
	noting that $\widehat{\varphi}$ decays rapidly and
	\[
	\partial_{t}^{j} \left( \sin(t|k|^2) \widehat{\varphi}(2t|k|) \right)(0)
	=\begin{cases}
		0, & j=0, \\
		|k|^2 \widehat{\varphi}(0), &  j=1.
	\end{cases}
	\]
	A direct computation shows that
	\begin{equation*}
		\begin{split}
			&(|k|^2 \langle k \rangle^2 - \partial_t^2) (\sin(t|k|^2) \widehat{\varphi}(2t|k|))\\
			&= (2|k|^4 + |k|^2) \sin (t|k|^2) \widehat{\varphi}(2t|k|) - 4|k|^3 \cos (t|k|^2) \widehat{\varphi}'(2t|k|) - 4|k|^2 \sin (t|k|^2) \widehat{\varphi}''(2t|k|) \\
			&\lesssim (|k|^2 \langle k \rangle^2 |\sin(t|k|^2)| + |k|^3) \langle kt \rangle^{-N_0}. 
		\end{split}
	\end{equation*}
	Hence
	\begin{align*}
		|m_{f}(i\tau,  k)| 
		\le C_{0} \int_{0}^{\infty}
			\frac{|k|^2\langle k \rangle^{2} |\sin(t|k|^2)| + |k|^3}{|k|^2 \langle k \rangle^2 + \tau^2} \langle kt \rangle^{-N_{0}} \, dt + \frac{C_{0} |k|^2 \widehat{\varphi}(0)}{|k|^2 \langle k \rangle^2 + \tau^2}
		\le \frac{C_{0}|k|^2 \langle k \rangle}{|k|^2 \langle k \rangle^2 + \tau^2}.
	\end{align*}
	Noting that $\partial_{\lambda}^{n} e^{-\lambda t} = (-t)^{n} e^{-\lambda t}$, we obtain the estimates for $\partial_{\lambda}^{n} m_{f}(\lambda, k)$. Namely,
	\[
		|\partial_{\lambda}^{n} m_{f}(\lambda, k)| \le C_{N_{0}} \int_{0}^{\infty} t^n \langle kt \rangle^{-N_{0}} \, dt \le C_{N_{0}} |k|^{-n} \langle k \rangle^{-1}
	\]	
	for $0\le n < N_{0} -1$ and $\Re \lambda \ge 0$. Moreover,
	\begin{align*}
		|\partial_{\lambda}^{n} m_{f}(\lambda, k)| 
		\le C_{N_{0}} \int_{0}^{\infty}
		\frac{|k|^2\langle k \rangle^{2} |\sin(t|k|^2)| + |k|^3}{|k|^2 \langle k \rangle^2 + |\Im \lambda|^2} t^n \langle kt \rangle^{-N_{0}} \, dt 
		\le \frac{C_{N_{0}}|k|^{2-n} \langle k \rangle}{|k|^2 \langle k \rangle^2 + |\Im \lambda|^2}.
	\end{align*}
	for $0 \le n < N_{0} -2$ and $\Re \lambda =0$.
	Using the stability condition \eqref{eq:Penrose-Lindhard} and assumptions \ref{assumption:potential} and \ref{assumption:potential-fourier} on $\widehat{w}$, we get
	\[
		|\partial_{\lambda}^{n} \widetilde{G}_{k}^{r} (\lambda)| 
		\le 
		\left| \sum_{j=0}^{n} \binom{n}{j} \partial_{\lambda}^{j} (1-D(\lambda, k)) \partial_{\lambda}^{n-j} (D(\lambda, k)^{-1}) \right| 
		\le \frac{C_{N_{0}} |k|^{2-n} \langle k \rangle}{|k|^2 \langle k \rangle^2 + |\Im \lambda|^2}
	\]
	for $0 \le n < N_{0} -2$ and $\Re \lambda \ge 0$. Taking the inverse Laplace transform and integrating by parts,
	\[
		\widehat{G}_{k}^{r}(t)  =  \frac{1}{2\pi} \int_{\mathbb{R}} e^{i\tau t} \widetilde{G}_{k}^{r}(i \tau) \, d\tau
		= \frac{(-1)^{n}}{2\pi t^n} \int_{\mathbb{R}} e^{i\tau t} \partial_{\lambda}^{n} \widetilde{G}_{k}^{r}(i\tau) \, d\tau
	\]
	for $0 \le n < N_{0}-2$. Take $n= N_{0} -3$. Since $\int_{\mathbb{R}} (a^2 + x^2)^{-1} dx = \pi |a|^{-1}$, we get
	\[
		|\widehat{G}_{k}^{r}(t)| \le \frac{C_{N_{0}}}{t^{N_{0}- 3}} \int_{\mathbb{R}} \frac{|k|^{-N_{0}+5} \langle k \rangle}{|k|^2 \langle k \rangle^2 + \tau^2} \, d \tau
		\le C_{N_{0}} |k| |kt|^{-N_{0} +3},
	\]
	which concludes the proof of \eqref{est:Green-Fourier}.
\end{proof}

\section{Nonlinear estimates}\label{sec:nonlinear}
Recall that $N_1$, $N_2$, and $N_3$ are chosen so that \[
		d+1 \le N_1 \le  n_0 -2,  \quad  N_2 \ge d+1, \quad N_3 = \min\{ N_1, N_2 \} - d -1.
\]  Let $\mu(t) = e^{-it\Delta} \gamma(t) e^{it\Delta}$ be the conjugate of $\gamma(t)$. Define the norms as
\begin{equation*}
	\begin{split}
		\| \mu \|_{X_{T}^{N}}
			&= \sup_{t \in[0, T]} \sum_{|\alpha| \le N} \left\| \langle k  \rangle^{N_2} \partial_p^{\alpha} \widehat{\mu}(t,k-p,p) \right\|_{L^{\infty}_k L^1_p}, \\
		\| \rho \|_{Y_{T}} 
			&= \sup_{t \in[0, T]} \left\| \langle kt \rangle^{N_1} \langle k \rangle^{N_2} \widehat{\rho}_{k}(t) \right\|_{L^{\infty}_{k}}, \\
		\| \mu \|_{Z_{T}} &= \sup_{t \in[0,T]}  \left( \|\mu \|_{X_{t}^{N_1 -2}} +  \langle t \rangle^{-\delta}\| \mu \|_{X_{t}^{N_1 -1}} + \langle t \rangle^{-1} \| \mu \|_{X_{t}^{N_1}}\right),
	\end{split}
\end{equation*}
where $\delta >0$ is a fixed small number.
Now we state and prove the bootstrap proposition.
\begin{proposition}\label{prop:bootstrap}
	Let $f= f(-\Delta)$ be the equilibrium and $w$ be the interaction potential described as in Section \ref{subsec:eq}. Assume that the initial density operator $\gamma_0$ satisfies \eqref{assumption:initial}. Let $\gamma(t)$ be the solution of \eqref{Hartree-perturbation} with initial density operator $\gamma_0$ for $0 \le t \le T$. Then the following a priori estimates
	\begin{align}
			\| \mu\|_{Z_{t}} &\lesssim \varepsilon + \| \rho \|_{Y_{t}} + \| \rho \|_{Y_{t}} \| \mu \|_{Z_{t}}, \label{est:nonlinear-mu} \\
			\| S \|_{Y_{t}} &\lesssim \varepsilon + \| \rho \|_{Y_{t}} \| \mu \|_{Z_{t}}, \label{est:nonlinear-source}  \\
			\| \rho \|_{Y_{t}} &\lesssim \| S \|_{Y_{t}}, \label{est:linear-density} 
	\end{align}
	hold for all $0\le t \le T$.
\end{proposition}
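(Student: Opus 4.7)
The plan is to establish the three a priori bounds by combining the resolvent identity \eqref{eq:resolvent}, the Green function decay \eqref{est:Green-Fourier}, and the Duhamel formulas for $\rho$, $S$, and $\mu$. It is cleanest to prove them in reverse order: \eqref{est:linear-density} first, then \eqref{est:nonlinear-source}, and finally the hardest one, \eqref{est:nonlinear-mu}.

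For \eqref{est:linear-density}, inverting \eqref{eq:resolvent} and using the decomposition \eqref{decomp:Green} yields
\[
\widehat{\rho}_k(t) = \widehat{S}_k(t) + \int_0^t \widehat{G}^r_k(t-s)\,\widehat{S}_k(s)\,ds.
\]
Inserting $|\widehat{S}_k(s)|\le \|S\|_{Y_t}\langle ks\rangle^{-N_1}\langle k\rangle^{-N_2}$ together with $|\widehat{G}^r_k(t-s)|\lesssim |k|\langle k(t-s)\rangle^{-\widetilde{N}_0}$ reduces matters to the convolution bound
\[
|k|\int_0^t \langle k(t-s)\rangle^{-\widetilde{N}_0}\langle ks\rangle^{-N_1}\,ds \;\lesssim\; \langle kt\rangle^{-N_1},
\]
which follows by splitting $[0,t]$ at $t/2$ provided $\widetilde{N}_0 = N_0 - 3$ is chosen large compared with $N_1$ (guaranteed by the regularity assumption on $f$). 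The weight $\langle k\rangle^{-N_2}$ factors out cleanly.

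For \eqref{est:nonlinear-source}, I split $S = S_0 + S_{\mathrm{nl}}$ according to the two terms in its definition. The linear part $S_0 = \rho[e^{it\Delta}\gamma_0 e^{-it\Delta}]$ is controlled by $\varepsilon\langle kt\rangle^{-N_1}\langle k\rangle^{-N_2}$ via Proposition \ref{prop:freeHartree} and \eqref{assumption:initial}. For $S_{\mathrm{nl}}$ I substitute $\gamma(s) = e^{is\Delta}\mu(s)e^{-is\Delta}$ so that, by the same change of variables $v = k - 2p$ as in Proposition \ref{prop:freeHartree}, the Fourier transform of the density of $\int_0^t e^{i(t-s)\Delta}[V,\gamma](s)e^{-i(t-s)\Delta}\,ds$ takes the form
\[
2^{-d}\int_0^t\!\!\int e^{-i(t-s)k\cdot v}\,\widehat{[V(s),\mu(s)]}\!\Big(\tfrac{k+v}{2},\tfrac{k-v}{2}\Big)\,dv\,ds.
\]
Expanding $\widehat{[V,\mu]}$ as an $\ell$-convolution involving $\widehat{w}(\ell)\,\widehat{\rho}(s,\ell)$ and $\widehat{\mu}(s)$, integrating by parts $N_1$ times in $v$ converts the oscillation into the gain $\langle k(t-s)\rangle^{-N_1}$ and distributes the derivatives $\partial_v = \tfrac12(\partial_k - \partial_p)$ onto $\widehat{\mu}$. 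Estimating $\widehat{\rho}$ by $\|\rho\|_{Y_t}$, the weighted $\widehat{\mu}$-derivatives by $\|\mu\|_{Z_t}$, and running the absorption inequality $\int\langle (k+v)/2,(k-v)/2\rangle^{-N_2}dv \lesssim \langle k\rangle^{-N_2+d}$ from the free case (with the $\langle k(t-s)\rangle^{-N_1}$ gain compensating the $d$ loss) yields the required control after performing the remaining $s$-integral.

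The main obstacle is \eqref{est:nonlinear-mu}. The Duhamel equation for $\mu$ reads
\[
\mu(t) = \gamma_0 - i\!\int_0^t e^{-is\Delta}[V(s),f]e^{is\Delta}\,ds \;-\; i\!\int_0^t e^{-is\Delta}[V(s),\gamma(s)]e^{is\Delta}\,ds,
\]
and the crucial identity
\[
(\partial_k - \partial_p)\,e^{is(|k|^2 - |p|^2)} = 2is(k+p)\,e^{is(|k|^2 - |p|^2)}
\]
shows that each $(\partial_k - \partial_p)$-derivative landing on the conjugation phase costs a factor $s(k+p)$. On the $[V,f]$ term, the distributional identity $\widehat{f}(k,p) = f(|k|^2)\delta(p+k)$ forces $k+p = 0$ on the support, annihilating these dangerous powers of $s$ and leaving a clean linear-in-$\rho$ bound via $\|\rho\|_{Y_t}$ after the $s$-integral. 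On the $[V,\gamma]$ term no such cancellation is available, and each phase-derivative produces a genuine power of $s$: this is precisely why the $Z_t$ norm allows $\|\mu\|_{X_t^{N_1-1}}$ to grow like $\langle t\rangle^\delta$ and $\|\mu\|_{X_t^{N_1}}$ like $\langle t\rangle$. Closing the nonlinear bound then hinges on using the $\langle ks\rangle^{-N_1}$ decay of $\widehat{\rho}$ (provided by $\|\rho\|_{Y_t}$) to dominate the polynomial-in-$s$ factors generated on the $\widehat{\gamma}$ side. The delicate bookkeeping of how the $|\alpha|$ derivatives distribute between $\widehat{V}$, the conjugation phase, and $\widehat{\mu}$, together with the small $\delta>0$ slack in $Z_t$ used to absorb logarithmic borderline terms, constitutes the bulk of the work.
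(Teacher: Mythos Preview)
Your argument for \eqref{est:linear-density} is correct and matches the paper. For \eqref{est:nonlinear-source} the strategy is right but the formula you wrote is not: substituting $\gamma(s)=e^{is\Delta}\mu(s)e^{-is\Delta}$ does \emph{not} turn $e^{i(t-s)\Delta}[V,\gamma]e^{-i(t-s)\Delta}$ into a conjugate of $[V,\mu]$, since $V$ and $e^{is\Delta}$ do not commute. After the $v=k-2p$ substitution the actual phase in $v$ is $e^{-i(kt-\ell s)\cdot v}$, not $e^{-ik(t-s)\cdot v}$; integration by parts then produces $\langle kt-\ell s\rangle^{-N_1}$, and one closes via $\langle kt\rangle^{N_1}\lesssim\langle \ell s\rangle^{N_1}\langle kt-\ell s\rangle^{N_1}$ together with the $\langle\ell s\rangle^{-N_1}$ from $\widehat\rho$. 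The paper does exactly this; your scheme survives once the phase is corrected.

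The genuine gap is in your handling of the linear $[V,f]$ contribution to \eqref{est:nonlinear-mu}. The claim that ``$\widehat f(k,p)=f(|k|^2)\delta(p+k)$ forces $k+p=0$ on the support, annihilating these dangerous powers of $s$'' is false: a direct computation gives $\widehat{[V,f]}(k,p)=\widehat V(k+p)\bigl(f(|p|^2)-f(|k|^2)\bigr)$, which is \emph{not} supported on $\{k+p=0\}$ (indeed it vanishes there). Hence the factors $(s(k+p))^{|\alpha_1|}$ coming from $(\partial_k-\partial_p)^{\alpha_1}e^{is(|k|^2-|p|^2)}$ are genuinely present in this term. The paper's mechanism is entirely different and uses two ingredients you do not mention. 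First, because $(\partial_k-\partial_p)\widehat V(k+p)=0$, all derivatives land either on the phase or on $f(|p|^2)-f(|k|^2)$, and the phase losses $|s(k+p)|^{|\alpha_1|}$ are absorbed by the decay $|\widehat\rho(s,k+p)|\lesssim\|\rho\|_{Y_t}\langle s(k+p)\rangle^{-N_1}\langle k+p\rangle^{-N_2}$, leaving $\int_0^t\langle s(k+p)\rangle^{-N_1+|\alpha_1|}\,ds$. Second, for small $|k+p|$ this $s$-integral is only $O(|k+p|^{-1})$, so one needs a compensating factor of $|k+p|$; the paper extracts it from a Taylor expansion of $f(|p|^2)-f(|k|^2)$ in the variable $k+p$. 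Without these two steps the linear estimate does not close, so your proposed argument for \eqref{est:nonlinear-mu} fails as written.
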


\subsection{Nonlinear estimates on the profile}\label{subsec:nonlinear-mu}

In this section, we prove \eqref{est:nonlinear-mu}. Recall that
\begin{equation}\label{eq:mu}
\begin{split}
	\widehat{\mu}(t,k,p)
	&= \widehat{\gamma_0}(k,p) - i \int_{0}^{t} e^{is(|k|^2 -|p|^2) } \widehat{w}(k+p) \widehat{\rho}(s, k+p) \left( f(|p|^2) - f(|k|^2) \right) ds \\
	&\quad - i \int_{0}^{t} \int_{\mathbb{R}^d} \widehat{w}(\ell) \widehat{\rho}(s,\ell) \left( e^{i \ell s \cdot (2k-\ell)} \widehat{\mu}(s,k-\ell,p) - e^{-i\ell s \cdot (2p-\ell)} \widehat{\mu}(s,k, p-\ell) \right) d\ell ds \\
	&=: \mathcal{I}(k,p) + \mathcal{R}^{L}(t,k,p) + \mathcal{R}^{NL}(t,k,p)
\end{split}
\end{equation}
We first estimate the initial data term $\mathcal{I}(k,p) = \widehat{\gamma_0}(k,p)$. Observe that
\[
	\| \mathcal{I} \|_{X_{T}^{N_1}} 
	\lesssim \sum_{|\alpha| \le N_1} \| \langle k \rangle^{N_2} \partial_p^{\alpha} \widehat{\gamma_0}(k-p,p) \|_{L^{\infty}_k L^1_p}
	\lesssim \varepsilon,
\]
which follows from \eqref{assumption:initial}. 

Next, we estimate the linear term. Let $m \le N_1$. Observe that 
	\begin{equation*}
	\begin{split}
		\langle k\rangle^{N_2} \sum_{|\alpha| \le m} \int |\partial_p^{\alpha} \mathcal{R}^{L}(t,k-p,p)| \, dp
		\lesssim \langle k \rangle^{N_2} \sum_{\substack{|\alpha_1| +|\alpha_2| \le m}} \int|\mathcal{R}^{L}_{\alpha_1, \alpha_2}(t,k-p,p)|\, dp
	\end{split}
	\end{equation*}
where
\begin{equation*}
	\begin{split}
		\mathcal{R}^{L}_{\alpha_1, \alpha_2}(t,k-p,p)
		&=  \int_{0}^{t} \partial_p^{\alpha_1 } e^{isk \cdot(k-2p)} \, 
		 \widehat{V}(s,k) \partial_p^{\alpha_2} (f(|p|^2) - f(|k-p|^2)) \, ds,
	\end{split}
\end{equation*}
noting that $V = w \star_x \rho$ and $\partial_p \widehat{V}(k) =0$.
Observe that
\begin{equation}\label{eq:profile-linear}
	\begin{split}
		\int |\mathcal{R}^{L}_{\alpha_1, \alpha_2}(t,k-p,p)| \, dp
		&\lesssim \| \rho \|_{Y_{t}} 
			\int_{0}^{t}  \langle k \rangle^{-N_2} \langle ks \rangle^{-N_1 +|\alpha_1|}
			\int \left| \partial_p^{\alpha_2} (f(|p|^2) - f(|k-p|^2)) \right| dp \, ds 
	\end{split}
\end{equation}
Consider the case when $|k| \ll 1$. Let $g(p)= f(|p|^2)$ for notational convenience. Substituting $k' = k+p$ and $p' = k-p$, it follows that
\begin{equation*}
	\begin{split}
		\partial_p^{\alpha_2} (f(|p|^2) - f(|k-p|^2))
		= \partial_{p}^{\alpha_2}\left( g(p) - g(p-k) \right) 
		\lesssim \langle p \rangle^{-2n_1} |k|
	\end{split}
\end{equation*}
for $|k| \ll 1$, upon using the Taylor's theorem on $\nabla^{\alpha_2} g$, provided that $N_1 + 1 \le n_0$. 
Then \eqref{eq:profile-linear} becomes
\begin{equation*}
	\begin{split}
		\langle k \rangle^{N_2} \int \left|\mathcal{R}^{L}_{\alpha_1, \alpha_2}(t,k-p,p) \right| \, dp
		&\lesssim \| \rho \|_{Y_{t}} 
		|k|\int_{0}^{t} \langle ks \rangle^{-N_1 +|\alpha_1|} \,  ds,
	\end{split}
\end{equation*}
where we used $2n_1 > d$.
Note that 
\begin{equation}\label{eq:s(k+p)}
	\begin{split}
			\int_{0}^{t} 
			\langle ks \rangle^{-N_1 +|\alpha_1|} \, ds
		\lesssim 
		\begin{cases}
			|k|^{-1},					&\textrm{if} \quad |\alpha_1| \le N_1 -2 \\[5pt]
			t^{\delta} \, |k|^{-1+\delta} 	, \quad &\textrm{if} \quad |\alpha_1|= N_1 -1 \\[5pt]
			t 	, 				&\textrm{if} \quad |\alpha_1| = N_1.
		\end{cases}
	\end{split}
\end{equation}
It remains to consider the case when $|k| \gtrsim 1$. From the decay of $f$, we obtain that
\begin{equation*}
	\begin{split}
		\partial_p^{\alpha_2} (f(|p|^2) - f(|k-p|^2)) \lesssim \langle p \rangle^{-2n_1} + \langle k-p \rangle^{-2n_1}.
	\end{split}
\end{equation*}
which is also integrable in $p$.
It follows from \eqref{eq:profile-linear} that
\begin{equation*}
	\begin{split}
		\langle k \rangle^{N_2} \int \left|\mathcal{R}^{L}_{\alpha_1, \alpha_2}(t,k-p,p) \right| \, dp
		&\lesssim 
		\| \rho \|_{Y_{t}}
		\int_{0}^{t} \langle ks \rangle^{-N_1 + |\alpha_1|}\, ds.
	\end{split}
\end{equation*}
Using \eqref{eq:s(k+p)}, we conclude that
\begin{equation}
	\begin{split}
		\| \mathcal{R}^{L} \|_{X_{t}^{N_1 -2 }}  
		+ \langle t\rangle^{-\delta} \|  \mathcal{R}^{L} \|_{X_{t}^{N_1 -1}} 
		+ \langle t\rangle^{-1} \|  \mathcal{R}^{L} \|_{X_{t}^{N_1}}
		&\lesssim \| \rho \|_{Y_{t}}
	\end{split}
\end{equation}
for $0\le t \le T$.

Now we estimate the nonlinear term given by
\[
	\mathcal{R}^{NL}(t,k,p) 
	= - i \int_{0}^{t} \int_{\mathbb{R}^d} \widehat{w}(\ell) \widehat{\rho}(s,\ell) \left( e^{i \ell s \cdot (2k-\ell)} \widehat{\mu}(s,k-\ell,p) - e^{-i\ell s \cdot (2p-\ell)} \widehat{\mu}(s,k, p-\ell) \right) d\ell ds.
\]
We focus on the first term as the other could be estimated in the same way. Define
\[
	\mathcal{R}^{NL, 1} (t,k,p) = \int_{0}^{t} \int_{\mathbb{R}^d} e^{i \ell s \cdot (2k- \ell)} \widehat{w}(\ell)   \widehat{\rho}(s,\ell)  \widehat{\mu}(s,k- \ell, p) \, d\ell ds.
\]
Using $\widehat{w}\in L^{\infty}$ and
\[
	\frac{\langle k \rangle^{N_2}}{\langle k- \ell \rangle^{N_2} \langle \ell \rangle^{N_2}} \lesssim \frac{1}{\min (\langle k- \ell \rangle, \langle \ell \rangle)^{N_2}},
\]
we obtain that
\begin{equation*}
	\begin{split}
		&\langle k\rangle^{N_2} \sum_{|\alpha| \le m} \int_{\RR^d} |\partial_p^{\alpha}  \mathcal{R}^{NL, 1}(t,k-p,p)| \, dp \\
		&\lesssim \langle k \rangle^{N_2}  
		\int_{0}^{t} \int_{\mathbb{R}^d} |\widehat{\rho}(s,\ell)| 
		\sum_{\substack{\alpha_1 + \alpha_2 \le m} } |\ell s|^{|\alpha_1|} \int_{\RR^d} \left| \partial_p^{\alpha_2} \widehat{\mu}(s,k-p- \ell,p)\right| dp \, d\ell \,  ds \\
		&\lesssim \| \rho \|_{Y_{t}} \sum_{n \le m }
		\int_{0}^{t}   \| \mu\|_{X_{s}^{ m- n}} \int_{\mathbb{R}^d}  \frac{\langle \ell s \rangle^{-N_1} }{\min (\langle k- \ell \rangle, \langle \ell \rangle)^{N_2}}
			 | \ell s|^{n}   d\ell \,  ds
	\end{split}
\end{equation*}
for any $m$. When $m \le N_1 -2$, we bound $\langle \ell s \rangle^{-N_1} |\ell s|^{n} \lesssim \langle  \ell s \rangle^{-2}$ to get 
\[
	\| \mathcal{R}^{NL, 1} \|_{X_{t}^{N_1 -2 }} \lesssim \| \rho \|_{Y_{t}} \| \mu \|_{Z_{t}} \int_{0}^{t} \langle s \rangle^{-2} \, ds \lesssim \| \rho \|_{Y_{t}} \| \mu \|_{Z_{t}}.
\]
When $m= N_1 - 1$, we consider the cases $n= 0$ and $1 \le n \le N_1 -1$ separately to obtain
\begin{equation*}
\begin{split}
	 \|  \mathcal{R}^{NL, 1} \|_{X_{t}^{N_1 -1}}
	 &\lesssim \| \rho \|_{Y_{t}} \sum_{n \le N_1 -1} \int_{0}^{t} \| \mu\|_{X^{N_1 - 1 - n}_{s}} 
	 	\int_{\mathbb{R}^d}  \frac{ \langle \ell s \rangle^{-N_1} }{\min (\langle k- \ell \rangle, \langle \ell \rangle)^{N_2}}
	 	| \ell s|^{n}   d\ell ds \\
	 &\lesssim \| \rho \|_{Y_{t}} \| \mu \|_{Z_{t}} \int_{0}^{t} (\langle s \rangle^{-d + \delta} + \langle s \rangle^{-1}) \, ds \\
	 &\lesssim t^{\delta} \, \| \rho \|_{Y_{t}} \| \mu \|_{Z_{t}}.
\end{split}
\end{equation*}
When $m = N_1$, we estimate it in a similar way to get
\begin{equation*}
	\begin{split}
		\|  \mathcal{R}^{NL, 1} \|_{X_{t}^{N_1}} 
		&\lesssim \| \rho \|_{Y_{t}} \sum_{n \le N_1} \int_{0}^{t} \| \mu\|_{X^{N_1  - n}_{s}} 
		\int_{\mathbb{R}^d}  \frac{\langle \ell s \rangle^{-N_1} }{\min (\langle k- \ell \rangle, \langle \ell \rangle)^{N_2}}
		| \ell s|^{n}   d\ell ds \\
		&\lesssim \| \rho \|_{Y_{t}} \| \mu \|_{Z_{t}} \int_{0}^{t} (\langle s \rangle^{-d + 1} +  \langle s \rangle^{-d + \delta} + 1) \, ds \\
		&\lesssim t \, \| \rho \|_{Y_{t}} \| \mu \|_{Z_{t}}.
	\end{split}
\end{equation*}
which completes the proof of
\begin{equation}
	\begin{split}
		\| \mathcal{R}^{NL} \|_{X_{t}^{N_1 -2}}  
		+ \langle t \rangle^{-\delta}\|  \mathcal{R}^{NL} \|_{X_{t}^{N_1 -1}}
		+ \langle t \rangle^{-1}  \|  \mathcal{R}^{NL} \|_{X_{t}^{N_1}}
		&\lesssim \| \rho \|_{Y_{t}} \| \mu \|_{Z_{t}}
	\end{split}
\end{equation}
for any $0\le t \le T$. To sum up, we obtain \eqref{est:nonlinear-mu}.

%%%%%%%%%%%%%%%%%%%
\subsection{Nonlinear estimates on the source term}\label{subsec:nonlinear-source}
In this section, we estimate the source term and prove \eqref{est:nonlinear-source}. We write
\begin{equation*}
\begin{split}
	\widehat{S}_{k}(t)
	&= \int e^{-itk\cdot(k-2p)} \widehat{\gamma_0}(k-p,p) \, dp \\
	& \quad - i\int_{0}^{t} \int \widehat{w}(\ell) \widehat{\rho}(s,\ell) e^{-it|k|^2} \int e^{2i(kt- \ell s) \cdot p} \left( e^{i \ell s\cdot(2k- \ell)} \widehat{\mu}(s,k-p- \ell ,p) - e^{i|\ell|^2 s} \widehat{\mu}(s, k-p, p-\ell) \right) dp \, d\ell \, ds \\
	&= \widehat{\rho}^{0}_{k}(t) + \widehat{\rho}^{NL}_{k} (t)
\end{split}
\end{equation*}
The first term is exactly the density associated with the free Hartree dynamics. Under the smallness assumption \eqref{assumption:initial} on the initial data, it follows that
\begin{equation}
	\langle kt \rangle^{N_1} \langle k \rangle^{N_2} \left| \widehat{\rho}^{0}_{k}(t) \right| \lesssim \varepsilon,
\end{equation}
for $0 \le t \le T$, see Proposition \ref{prop:freeHartree} for details.

Next, we estimate the nonlinear term. Recall that
\begin{equation*}
\begin{split}
	& \widehat{\rho}^{NL}_{k}(t)  \\
	&= - i\int_{0}^{t} \int \widehat{w}(\ell) \widehat{\rho}(s,\ell) e^{-it|k|^2} \int e^{2i(kt- \ell s) \cdot p} \left( e^{i \ell s\cdot(2k- \ell)} \widehat{\mu}(s,k-p- \ell ,p) - e^{i|\ell|^2 s} \widehat{\mu}(s, k-p, p-\ell) \right) dp \, d\ell \, ds.
\end{split}
\end{equation*}
We focus on the first term as the other one can be estimated in a similar way. Define
\[
	\widehat{\rho}^{NL,1}_{k}(t) =  e^{-it|k|^2}  \int_{0}^{t} \int \widehat{w}(\ell) \widehat{\rho}(s,\ell) e^{i \ell s\cdot(2k- \ell)} \int e^{2i(kt- \ell s) \cdot p}   \widehat{\mu}(s,k-p- \ell ,p) \, dp \, d\ell \, ds.
\]
Integrating by parts with respect to $p$ yields
\begin{equation*}
\begin{split}
	\langle kt \rangle^{N_1} \langle k \rangle^{N_2}| \widehat{\rho}^{NL,1}_{k}(t)| 
	&\lesssim \| \rho \|_{Y_{t}} \int_{0}^{t} \| \mu \|_{X_{s}^{N_1}}
	\int \frac{\langle kt \rangle^{N_1}}{\langle \ell s \rangle^{N_1} \langle kt - \ell s \rangle^{N_1}} 
	\frac{\langle k \rangle^{N_2}}{\langle \ell \rangle^{N_2}\langle k-\ell \rangle^{N_2}} 
	 \,   d\ell  \, ds \\
	&\lesssim \| \rho \|_{Y_{t}} \| \mu \|_{Z_t} \int_{0}^{t} \langle s \rangle^{-d + 1} \, ds \\
	&\lesssim \| \rho\|_{Y_{t}} \| \mu \|_{Z_t}
\end{split}
\end{equation*}
given that $d\ge 3$. It follows that
	\begin{equation}
		\langle kt \rangle^{N_1} \langle k \rangle^{N_2} \left| \widehat{\rho}^{NL}_{k}(t) \right| \lesssim \| \rho \|_{Y_{t}} \| \mu \|_{Z_{t}}
	\end{equation}
for $0\le t\le T$, which concludes the proof of \eqref{est:nonlinear-source}.

\subsection{Proof of Proposition \ref{prop:bootstrap}}\label{sec:bootstrap}
Note that \eqref{est:nonlinear-mu} and \eqref{est:nonlinear-source} are proved in Sections \ref{subsec:nonlinear-mu} and \ref{subsec:nonlinear-source}, respectively. It remains to prove \eqref{est:linear-density}.
It follows from \eqref{eq:resolvent} and \eqref{decomp:Green} that
\[
	\widehat{\rho}_{k}(t) = \widehat{S}_{k}(t) + \int_{0}^{t} \widehat{G}^{r}_{k}(t-s) \widehat{S}_{k}(s) \, ds.
\]
Multiplying both sides by $\langle kt \rangle^{N_1} \langle k \rangle^{N_2}$, we have
	\begin{equation*}
		\begin{split}
			\langle kt \rangle^{N_1} \langle k \rangle^{N_2} | \widehat{\rho}_{k}(t) |
			&\lesssim \langle kt \rangle^{N_1} \langle k \rangle^{N_2} 
			|\widehat{S}_{k}(t)| + \langle kt \rangle^{N_1} \langle k \rangle^{N_2} \left|  \int_{0}^{t} \widehat{G}^{r}_{k}(t-s) \widehat{S}_{k}(s)\, ds. \right|
		\end{split}
	\end{equation*}
	The first term on the right hand side is bounded by the $Y_t$ norm. Namely,
	\[
	\langle kt \rangle^{N_1} \langle k \rangle^{N_2 } |\widehat{S}_{k}(t) | \lesssim \| S \|_{Y_{t}}
	\]
	for $0 \le t \le T$.
	On the other hand, we use \eqref{est:Green-Fourier} to estimate the other term as
	\begin{equation*}
		\begin{split}
			 \langle kt \rangle^{N_1} \langle k \rangle^{N_2} \left|  \int_{0}^{t} \widehat{G}^{r}_{k}(t-s) \widehat{S}_{k}(s)\, ds \right|
			&\lesssim  \| S \|_{Y_{t}}   |k| \int_{0}^{t} \frac{  \langle kt \rangle^{N_1}}{\langle k(t-s) \rangle^{\widetilde{N}_{0}} \langle ks \rangle^{N_1}} ds \\
			&\lesssim \| S \|_{Y_{t}} |k|  \left( \int_{0}^{t/2}  \langle ks \rangle^{-N_1} ds + \int_{t/2}^{t} \langle k(t-s) \rangle^{-N_1} \, ds \right) \\
			&\lesssim \| S \|_{Y_{t}}
		\end{split}
	\end{equation*}
	noting that $N_1 \le \widetilde{N}_0 = N_0 -3$. This completes the proof of \eqref{est:linear-density}.

\subsection{Proof of Theorem \ref{thm:main}}\label{sec:mainthm}
In this section, we give a proof of Theorem \ref{thm:main}. Define $\zeta(t)$ by
\[
\zeta(t):= \| S \|_{Y_{t}} + \| \mu \|_{Z_{t}}.
\]
From \eqref{est:nonlinear-source}, we have
\[
	\| S \|_{Y_{t}} \lesssim \varepsilon + \| S \|_{Y_{t}} \| \mu \|_{Z_{t}}.
\]
On the other hand, it follows from \eqref{est:nonlinear-mu} that
\[
	\| \mu \|_{Z_{t}} \lesssim  \varepsilon + \| S \|_{Y_{t}} + \| S \|_{Y_{t}} \| \mu \|_{Z_{t}} \lesssim \varepsilon + \| S \|_{Y_{t}} \| \mu \|_{Z_{t}}.
\]
Combining these two, we have
\[
	\zeta(t) \le C_0 \varepsilon + C_1 \zeta(t)^2.
\]
for some absolute constants $C_0$ and $C_1$. From the local well-posedness theory and the standard continuation argument, $\zeta(t)$ exists for all $t\ge0$ and satisfies $\zeta(t) \le 2 C_0 \varepsilon$ for sufficiently small $\varepsilon >0$. This proves that $$\| \rho \|_{Y_{t}} \lesssim \varepsilon$$ for all $t\ge 0$.
This implies that
\[
\| \partial_x^{n} \rho(t) \|_{L^{\infty}_x} \lesssim \varepsilon\int_{\mathbb{R}^d} |k|^{n} \langle kt \rangle^{-N_1} \langle k \rangle^{-N_2} \, dk \lesssim \varepsilon \langle t \rangle^{-d -n}
\]
for $0 \le n \le N_3$.

Finally, we establish scattering of $\gamma(t)$ in the Hilbert-Schmidt space. Recall from \eqref{eq:mu} that
\begin{equation*}
	\begin{split}
		\frac{1}{2} \frac{d}{dt} \| \widehat{\mu}(t) \|_{L^2_{k,p}}^2
		&= \Im \iint e^{it(|k|^2 - |p|^2)} \widehat{V}(t, k+p) (f(|p|^2) - f(|k|^2)) \overline{\widehat{\mu}}(t,k,p) \, dk \, dp\\
		&\quad + \Im \iiint \widehat{V}(t, \ell) \left( e^{i \ell t \cdot(2k-\ell)} \widehat{\mu}(t,k- \ell,p) - e^{-i \ell t \cdot(2p- \ell)} \widehat{\mu}(t,k,p-\ell) \right) \! \overline{\widehat{\mu}}(t,k,p) \, dk \, dp  \, d\ell
	\end{split}
\end{equation*}
We first prove
\begin{equation}\label{est:scattering-g}
	\int_{\mathbb{R}^d} \left| g(p-k) - g (p+k) \right|^2 dp \lesssim |k|^2
\end{equation}
where $g(p)= f(|p|^2)$. Note that $\| g \|_{H^1} < \infty$. We use the mean value theorem to get
\begin{equation*}
	\begin{split}
		\left|g(p-k) - g(p+k)\right|^2
		\lesssim \left| \int_{-1}^{1} \nabla g \left( p+\theta k\right) \cdot k \, d\theta \right|^2 
		\lesssim |k|^2 \int_{-1}^{1} \left| \nabla g \left( p+\theta k\right) \right|^2  d \theta.
	\end{split}
\end{equation*}
Integrating with respect to $p$, it follows that
\begin{equation*}
	\begin{split}
		\int_{\mathbb{R}^d} \left| g(p-k) - g(p+k) \right|^2 dp
		&\lesssim |k|^2 \int_{-1}^{1} \int_{\mathbb{R}^d} \left| \nabla g (p+\theta k)\right|^2 dp \, d\theta \lesssim |k|^2 \| \nabla g \|_{L^2}^2 \lesssim |k|^2,
	\end{split}
\end{equation*}
which concludes the proof of \eqref{est:scattering-g}. 
Now we estimate the linear term.
Substituting $k' = \frac{k+p}{2}$ and $p' = \frac{k-p}{2}$, we obtain that
\begin{equation*}
\begin{split}
	\iint &\left| e^{it(|k|^2-|p|^2)} \widehat{V}(t,k+p) (f(|p|^2) - f(|k|^2)) \overline{\widehat{\mu}}(t,k,p) \right| dk \, dp \\
	&\lesssim \| \widehat{\mu}(t) \|_{L^2_{k,p}}
		\left(\iint \left| e^{isk' \cdot p'} \widehat{V}(t,k') \left( g(p' -k') - g(p'+k') \right) \right|^2 dk'  dp' \right)^{\! 1/2}  \\
	&\lesssim \| \widehat{\mu}(t) \|_{L^2_{k,p}}
		\left(\int |\widehat{V}(t,k')|^2 \int \left| g(p' -k') - g(p'+k') \right|^2 dp'  dk' \right)^{\! 1/2} \\
	&\lesssim \varepsilon \| \widehat{\mu}(t) \|_{L^2_{k,p}}
		\left(\int |k|^2 \langle k \rangle^{-2N_2}  \langle kt \rangle^{-2N_1}  \, dk \right)^{\! 1/2} \\
	&\lesssim \varepsilon \langle t \rangle^{-1- d/2} \| \widehat{\mu}(t) \|_{L^2_{k,p}} .
\end{split}
\end{equation*}
On the other hand, one of the nonlinear term can be estimated as
\begin{equation*}
	\begin{split}
		\iiint \left| e^{i\ell t \cdot (2k - \ell)} \widehat{V}(t, \ell) \widehat{\mu}(t,k-\ell,p) \overline{\widehat{\mu}}(t,k,p) \right|  dp  dk  d\ell
		&\lesssim \| \widehat{V}(t) \|_{L^1_{k}} \| \widehat{\mu}(t)\|_{L^2_{k,p}}^2 \lesssim \varepsilon \langle t \rangle^{-d} \| \widehat{\mu}(t) \|_{L^2_{k,p}}^2,
	\end{split}
\end{equation*}
where the last inequality follows from
\[
\| \widehat{V}(t) \|_{L^1_k}
\lesssim \varepsilon \int_{\mathbb{R}^d} \langle k \rangle^{-N_2} \langle kt \rangle^{-N_1} \, dk
\lesssim \varepsilon \langle t \rangle^{-d}.
\]
The other term can be estimated in a similar way. Hence we obtain
\[
	\frac{d}{dt} \| \widehat{\mu}(t) \|_{L^2_{k,p}} \lesssim \varepsilon \langle t \rangle^{-1-d/2} + \varepsilon \| \widehat{\mu} (t) \|_{L^2_{k,p}} \langle t \rangle^{-d}.
\]
Applying Gronwall's lemma, we obtain that $\| \widehat{\mu}(t) \|_{L^2_{k,p}}$ is uniformly bounded in $t$ so that
\[
	\frac{d}{dt} \| \widehat{\mu}(t)\|_{L^2_{k,p}} \lesssim \varepsilon \langle t \rangle^{-1-d/2}.
\]
It follows that 
\[
	\| \mu(t_1) - \mu(t_2) \|_{L^2_{x,y}} \lesssim \varepsilon \int_{t_2}^{t_1}  \langle s \rangle^{-1-d/2} \, ds \lesssim \varepsilon \langle t_2 \rangle^{-d/2}
\]
for all $t_1 \ge t_2 \ge 0$. Hence $\mu(t)$ has a unique limit $\gamma_{\infty} \in \mathcal{HS}$ as $t \rightarrow \infty$. Moreover, there holds
\[
	\| e^{-it\Delta} \gamma(t) e^{it\Delta} - \gamma_{\infty} \|_{\mathcal{HS}}
	=\| \mu(t) - \gamma_{\infty} \|_{L^2_{x,y}} \lesssim \varepsilon \langle t \rangle^{-d/2},
\]
which completes the proof of Theorem \ref{thm:main}.

%%%%%%%%%%%%%%%%%%%%%%%%%%
\appendix
\section{Proof of Theorem \ref{thm:Penrose-Lindhard}}\label{sec:Penrose-Lindhard}
In this section, we prove Theorem \ref{thm:Penrose-Lindhard}. We begin with the properties of marginal.

\begin{lemma}\label{lem:varphi}
	Let $f=f(-\Delta)$ be the equilibrium satisfying \ref{equi:nonneg}. Let 
	\begin{equation}\label{def-marginals}
		\varphi(u) =\int_{\mathbb{R}^{d-1}} f (u^2 + |v'|^2) \, dv'
	\end{equation}
	be the marginal of $f$.
	Then, $\varphi$ is an even function on $\mathbb{R}$ and $\varphi'(u) <0$ for $0<u<\Upsilon$.
	If we further assume \ref{equi:reg} and \ref{equi:decay}, then there also hold
	\begin{itemize}
		\item $\varphi$ is of class $C^{N_0}$ where $N_0 = n_{0} + \lfloor \frac{d-1}{2} \rfloor$.
		\item $\varphi$ and its derivatives satisfy  
		\begin{equation}\label{decay-varphi}
			|\partial_{u}^{n} \varphi(u) | \lesssim \langle u \rangle^{-N_1-n}
		\end{equation} 
		for all $u \in \mathbb{R}$ and $0 \le n \le N_{0}$ where $N = 2n_1 - d + 1$.
		\item $\widehat{\varphi}$ and its derivatives satisfy 
		\begin{equation}
			|\partial_t^{n} \widehat{\varphi}(t) | \lesssim \langle t \rangle^{-N_0}
		\end{equation}
		for $0 \le n \le N$.
	\end{itemize}
\end{lemma}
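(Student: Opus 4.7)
The plan is to pass to polar coordinates in $w\in\mathbb{R}^{d-1}$ and reduce the analysis to the one-dimensional integral
\[
\psi(t) = \frac{|\mathbb{S}^{d-2}|}{2} \int_0^\infty f(t+\tau)\,\tau^{(d-3)/2}\,d\tau, \qquad \varphi(u) = \psi(u^2),
\]
which follows after the substitution $\sigma = u^2 + r^2$ with $r = |w|$. Evenness of $\varphi$ is then immediate from the formula $\varphi(u)=\psi(u^2)$. For strict monotonicity on $(0,\Upsilon)$, I would differentiate the companion representation $\varphi(u) = \tfrac{|\mathbb{S}^{d-2}|}{2}\int_{u^2}^\infty f(\sigma)(\sigma-u^2)^{(d-3)/2}\,d\sigma$: in the case $d = 3$ a direct calculation gives $\varphi'(u) = -|\mathbb{S}^1|\,u\,f(u^2)$, which is negative on $(0,\Upsilon)$ by \textbf{(A1)}; for $d \ge 4$, one integration by parts produces a positive integral of $f$ against the positive kernel $(\sigma-u^2)^{(d-5)/2}$ over $[u^2, \Upsilon^2)$, which is strictly positive by \textbf{(A1)}, and the minus sign from IBP delivers the claim.

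For the $C^{N_0}$ regularity, the key observation is that differentiating $\psi$ in $t$ and then integrating by parts in $\tau$ exchanges a derivative on $f$ for one on $\tau^{(d-3)/2}$: each IBP decreases the exponent of $\tau$ by $1$ at the cost of a multiplicative constant, and is valid as long as the current exponent is positive, so the boundary term at $\tau=0$ vanishes. This exchange can be performed $\lfloor (d-1)/2\rfloor$ times, after which the power becomes non-positive and no further trade is possible. Combined with the $n_0$ derivatives available on $f$, this gives $\psi \in C^{n_0 + \lfloor (d-1)/2\rfloor}$, and the chain rule then yields $\varphi \in C^{N_0}$.

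For the pointwise decay of $\partial_u^n \varphi$ when $n\le n_0$, Fa\`a di Bruno's formula for the inner function $u^2+|w|^2$ gives
\[
\partial_u^n f(u^2+|w|^2) = \sum_{k=\lceil n/2\rceil}^{n} c_{n,k}\, u^{2k-n}\, f^{(k)}(u^2+|w|^2),
\]
and the elementary bound $\int_{\mathbb{R}^{d-1}} \langle u^2+|w|^2\rangle^{-n_1-k}\,dw \lesssim \langle u\rangle^{-N_1-2k}$ (obtained by rescaling $w\mapsto \langle u\rangle w'$ and using $n_1\ge d$) combined with the $|u|^{2k-n}$ factor yields $\langle u\rangle^{-N_1-n}$ after summation. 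For the range $n_0 < n \le N_0$, I would combine the chain rule $\varphi=\psi\circ(\cdot)^2$ with the IBP representation of $\psi^{(k)}$ from the preceding step, which expresses the high derivatives of $\psi$ in terms of lower-order derivatives of $f$ weighted by integrable powers of $\tau$, and then apply the same decay estimates. Finally, for the Fourier decay, I would write $\partial_t^n\widehat{\varphi}(t) = (-i)^n \widehat{u^n\varphi}(t)$ and transfer $N_0$ derivatives off the exponential via integration by parts in $u$, producing a factor $t^{-N_0}$ and replacing the integrand by $\partial_u^{N_0}(u^n\varphi(u))$. The Leibniz expansion together with the already-established decay of $\partial^j\varphi$ shows $\partial_u^{N_0}(u^n\varphi) \in L^1_u$ precisely when $n \le N_1 + N_0 - 2$, a range that comfortably includes $0\le n \le N_1$; the small-$t$ half of the $\langle t\rangle^{-N_0}$ bound follows by taking the $L^\infty$ estimate on the same expression.

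The main obstacle is the careful bookkeeping of the integration by parts step that drives the regularity gain, in particular distinguishing the integer (odd $d$) and half-integer (even $d$) cases of $(d-3)/2$, together with propagating the resulting formulas cleanly through the chain rule $\varphi=\psi\circ(\cdot)^2$ when controlling the highest derivatives. A secondary subtlety is the borderline range of $n$ near $N_1$ in the Fourier decay, where $u^n\varphi$ itself is not integrable; this is resolved by transferring $N_0$ derivatives via IBP \emph{before} taking absolute values, so the $L^1$ demand lands on $\partial^{N_0}(u^n\varphi)$ rather than on $u^n\varphi$.
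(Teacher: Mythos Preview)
The paper does not give a self-contained proof of this lemma; it simply refers to \cite[Lemma~2.2]{Nguyen2023a}. Your proposal is the natural approach and is essentially what one expects that cited proof to contain: pass to the radial variable to obtain $\varphi(u)=\psi(u^2)$ with $\psi(t)=c_d\int_0^\infty f(t+\tau)\tau^{(d-3)/2}\,d\tau$, read off evenness and strict monotonicity, and trade $\partial_t$-derivatives on $f$ against powers of $\tau$ via integration by parts to gain $\lfloor (d-1)/2\rfloor$ extra derivatives. The decay bounds via Fa\`a di Bruno and rescaling are correct, and the large-$|t|$ Fourier estimate by $N_0$ integrations by parts is fine.

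Two points deserve more care. First, in the regularity count for odd $d$ your sentence ``valid as long as the current exponent is positive'' undercounts by one: after $(d-3)/2$ integrations by parts the exponent reaches $0$, and the last extra derivative is obtained not by another integration by parts but by the fundamental theorem of calculus applied to $\int_t^\infty f^{(n_0)}(s)\,ds$. This is what actually gives $\lfloor (d-1)/2\rfloor = (d-1)/2$ in the odd case.

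Second, and more substantively, your treatment of the small-$|t|$ bound on $\partial_t^n\widehat\varphi$ has a gap at the top indices $n\in\{N_1-1,N_1\}$. You write $\partial_t^n\widehat\varphi=(-i)^n\widehat{u^n\varphi}$ and then either integrate by parts $N_0$ times (picking up $t^{-N_0}$, useless for $|t|\le 1$) or ``take the $L^\infty$ estimate''. But $|u^n\varphi(u)|\lesssim\langle u\rangle^{n-N_1}$, so $u^n\varphi\notin L^1$ for $n\ge N_1-1$ and the identity $\partial_t^n\widehat\varphi=(-i)^n\widehat{u^n\varphi}$ is not directly available; likewise the boundary term in the very first integration by parts need not vanish when $n=N_1$. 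One needs an additional device here --- for instance, first integrating by parts twice in $u$ on $\widehat\varphi$ itself (so the weight becomes $\varphi''$ with two extra orders of decay) and then carefully tracking the resulting $t$-dependence, or exploiting the evenness of $\varphi$. Note that the paper only ever uses this bound for $n\le 2$ (in the Green function estimates), well inside the safe range $n\le N_1-2$, so the endpoint issue does not affect the rest of the argument; but as stated your sketch does not close at $n=N_1$.
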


\begin{proof}
	Refer to \cite[Lemma~2.2]{Nguyen2025}.
\end{proof}

\begin{lemma}
	For $k \in \mathbb{R}^d \setminus \{0\}$ and $\Re \lambda >0$, we can write the dispersion relation as
	\begin{equation}\label{def:Dlambda1}
		D(\lambda, k)= 1 + \frac{\widehat{w}(k)}{2|k|} \left[ \mathcal{H}\left( \frac{-i\lambda + |k|^2}{2|k|} \right) - \mathcal{H} \left( \frac{-i \lambda - |k|^2}{2|k|} \right) \right],
	\end{equation}
	where
	\[
	\mathcal{H}(z) = \int_{\mathbb{R}} \frac{\varphi(u)}{z-u} du.
	\]
	In particular, for fixed $k \neq 0$, the dispersion relation $D(\lambda, k)$ is analytic in $\{ \Re \lambda >0 \}$ and admits a continuous extension up to the imaginary axis $\{ \Re \lambda =0 \}$.
\end{lemma}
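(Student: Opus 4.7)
The plan is to start from the integral representation \eqref{eq:dispersion} and reduce it to the one-dimensional Cauchy integral $\mathcal{H}$ by integrating out the $(d-1)$ coordinates transverse to $k$. First I would split the difference $f(|p|^2) - f(|k-p|^2)$ into two separate integrals and substitute $p \mapsto k - p$ in the second one; a direct computation shows that the denominator transforms as $\lambda - 2i k\cdot p + i|k|^2 \mapsto \lambda + 2i k\cdot p - i|k|^2$, so after the substitution both integrals carry $f(|p|^2)$ in the numerator:
\[
D(\lambda, k) = 1 + i\widehat{w}(k)\int_{\mathbb{R}^d} \frac{f(|p|^2)}{\lambda - 2i k\cdot p + i|k|^2}\,dp \; - \; i\widehat{w}(k)\int_{\mathbb{R}^d} \frac{f(|p|^2)}{\lambda + 2i k\cdot p - i|k|^2}\,dp.
\]

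Next, I would align coordinates so that $k = |k|e_1$ and write $p = ue_1 + p'$ with $p' \in \mathbb{R}^{d-1}$. Since the denominators depend on $p$ only through $u = (p\cdot k)/|k|$, Fubini collapses the transverse integral $\int_{\mathbb{R}^{d-1}} f(u^2 + |p'|^2)\,dp'$ to the marginal $\varphi(u)$ of \eqref{def-marginals}. In the second integral I would additionally substitute $u \mapsto -u$ and invoke the evenness of $\varphi$ from Lemma \ref{lem:varphi}, so that both resulting one-dimensional integrals share the denominator pattern $\lambda - 2i|k|u \pm i|k|^2$. Factoring $-2i|k|$ out of each denominator, a short arithmetic check gives $(2i|k|)^{-1}\mathcal{H}\bigl((-i\lambda \pm |k|^2)/(2|k|)\bigr)$; combining with the prefactors $\pm i\widehat{w}(k)$ yields exactly \eqref{def:Dlambda1}. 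Throughout, the original integrals converge absolutely for $\Re\lambda > 0$, so the identity is established there first and then extended to the imaginary axis by continuity of both sides.

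For the analyticity claim, the key observation is that for fixed $k \ne 0$ the two arguments of $\mathcal{H}$ both have imaginary part $-\Re\lambda/(2|k|)$, hence lie in the open lower half plane precisely when $\Re\lambda > 0$. Since $\varphi \in L^1(\mathbb{R})$ by Lemma \ref{lem:varphi}, the Cauchy integral $\mathcal{H}(z)$ is holomorphic on $\mathbb{C}\setminus\mathbb{R}$, and composition with an affine map in $\lambda$ gives analyticity of $D(\cdot,k)$ on $\{\Re\lambda > 0\}$. Continuity up to the imaginary axis follows from the Plemelj--Sokhotski formula for Cauchy integrals of smooth, decaying densities: the regularity and decay of $\varphi$ provided by Lemma \ref{lem:varphi} are more than enough to produce the one-sided boundary values $\mathcal{H}_-(x_0) = \mathrm{p.v.}\!\int \varphi(u)/(x_0-u)\,du + i\pi\varphi(x_0)$, and these depend continuously on $x_0$.

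There is no serious obstacle here; the main work is sign bookkeeping in the two substitutions $p \mapsto k-p$ and $u \mapsto -u$ to ensure that the final arguments of $\mathcal{H}$ match the normalization of \eqref{def:Dlambda1}, and a brief justification that the $p'$-integration and Laplace variable manipulations interchange, which is immediate from absolute convergence for $\Re\lambda>0$.
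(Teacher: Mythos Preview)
Your proposal is correct and is precisely the standard route to \eqref{def:Dlambda1}: split the numerator, change variables $p\mapsto k-p$ in the second piece, reduce to the marginal $\varphi$ via the transverse integration, and identify the Cauchy transform after factoring $-2i|k|$. The paper itself does not reproduce this computation but simply cites \cite[Lemma~2.3]{Nguyen2023a}, where the same argument is carried out; your handling of the boundary behavior via Plemelj--Sokhotski also matches how the paper uses $\mathcal{H}$ on the imaginary axis later in Proposition~\ref{prop:nozero2}.
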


\begin{proof}
	Refer to \cite[Lemma~2.3]{Nguyen2025}.
\end{proof}

	Note that the dispersion relation can also be written as
	\begin{align}
		D(\lambda, k)
		&= 1 + i \widehat{w}(k) \int_{\mathbb{R}^d} \int_{0}^{\infty} e^{-(\lambda - 2ik\cdot p + i |k|^2)t} (f(|p|^2) - f(|k-p|^2) )\, dt  dp \nonumber \\
		&= 1 + i \widehat{w}(k) \int_{0}^{\infty} e^{-\lambda t} (e^{-i|k|^2 t} - e^{i|k|^2 t}) \int_{\mathbb{R}^d} e^{2itk \cdot p } f(|p|^2) \, dp dt \nonumber \\
		&= 1 + 2 \widehat{w}(k) \int_{0}^{\infty} e^{-\lambda t} \sin(t|k|^2) \widehat{\varphi}(2t|k|) \, dt. \label{eq:disp-varphi}
	\end{align}
	
	Meanwhile, the limit of $D(\lambda, k)$ as $k \rightarrow 0$ is not well-defined at $\lambda = i \tau$ for $\tau \in \mathbb{R}$, so we introduce
	\[
		\widetilde{D}(\tilde{\lambda},k) : = D(\lambda, k),
	\]
	where $\tilde{\lambda} = \lambda / |k|$.  From \eqref{eq:disp-varphi}, we obtain for $k \neq 0 $ that
	\begin{equation*}
	\begin{split}
		\widetilde{D}(\tilde{\lambda}, k) 
		= 1 + 2 \widehat{w}(k) \int_{0}^{\infty} e^{-\tilde{\lambda} |k| t} \sin(t|k|^2) \widehat{\varphi}(2t|k|) \, dt 
		= 1 + \widehat{w}(k) \int_{0}^{\infty} e^{-\tilde{\lambda} t /2} \frac{\sin(t|k|/2)}{|k|} \widehat{\varphi}(t) \, dt,
	\end{split}
	\end{equation*}
	We define
	\begin{align}
		\widetilde{D}(\tilde{\lambda}, 0) 
		&:= \lim_{k\rightarrow 0} \widetilde{D}(\tilde{\lambda}, k) = 1 + \frac{\widehat{w}(0)}{2} \int_{0}^{\infty} e^{-\tilde{\lambda} t /2} t \widehat{\varphi}(t) \, dt \nonumber \\
		&= 1 + \frac{\widehat{w}(0)}{2} \int_{\mathbb{R}} \varphi(u) \int_{0}^{\infty} t e^{(iu - \tilde{\lambda}/2)t} \, dt du \nonumber \\
		&= 1 - \frac{\widehat{w}(0)}{2} \int_{\mathbb{R}} \frac{\varphi(u)}{( -i \tilde{\lambda}/2 - u )^2} \, du \nonumber \\
		&= 1 + \frac{\widehat{w}(0)}{2} \int_{\mathbb{R}} \frac{\varphi'(u)}{- i\tilde{\lambda}/2 - u } \, du \label{eq:disp-k=0}
	\end{align}
	for $\Re \tilde{\lambda} \ge 0$.
	Then $\widetilde{D}(\tilde{\lambda}, k)$ is continuous on $\{\Re \tilde{\lambda} \ge 0\} \times \mathbb{R}^d$.
	Now we prove that $\widetilde{D}(\tilde{\lambda}, k)$ does not admit any zeros with positive real part.

\begin{proposition}\label{prop:nozero1}
	Let $f=f(-\Delta)$ be the equilibrium satisfying \ref{equi:nonneg} and $w$ be the interaction potential satisfying \ref{assumption:potential} and \ref{assumption:potential-fourier}. For $k \in \mathbb{R}^d$, there are no zeros of $\widetilde{D}(\tilde{\lambda}, k)$ in $\{ \Re \tilde{\lambda} > 0\}$.
\end{proposition}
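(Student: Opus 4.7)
The plan is to compute $\Im \widetilde{D}(\tilde\lambda, k)$ and show it is nonzero whenever $\Im\tilde\lambda \neq 0$, and separately to show $\widetilde D(\tilde\lambda, k) \geq 1$ on the positive real axis. Both statements reduce, via a change of variables to the one-dimensional marginal $\varphi$ and an even/odd symmetrization, to the fact that $\varphi$ is even, nonnegative, and strictly decreasing on $(0, \Upsilon)$ (Lemma \ref{lem:varphi} and \ref{equi:nonneg}). I treat the cases $k \neq 0$ and $k = 0$ in parallel.

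For $k \neq 0$: since $\tilde\lambda \mapsto \tilde\lambda |k|$ bijects $\{\Re > 0\}$ with itself, it is equivalent to show $D(\lambda, k) \neq 0$ on $\{\Re\lambda > 0\}$. Symmetrizing \eqref{eq:dispersion} via $p \leftrightarrow k - p$ yields
\[
D(\lambda, k) - 1 = -2\widehat w(k) \int_{\mathbb{R}^d} f(|p|^2)\, \frac{2k\cdot p - |k|^2}{\lambda^2 + (|k|^2 - 2k\cdot p)^2}\, dp,
\]
and reducing to the marginal via $u = k\cdot p/|k|$ and $v = 2u - |k|$, then retaining only the odd-in-$v$ part of $\varphi((v+|k|)/2)$, gives
\[
D(\lambda, k) - 1 = -\frac{\widehat w(k) |k|}{2} \int_{\mathbb{R}} \chi(v)\, \frac{v}{\lambda^2 + |k|^2 v^2}\, dv,
\]
where $\chi(v) := \varphi((v+|k|)/2) - \varphi((v-|k|)/2)$. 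With $\lambda = \sigma + i\tau$, $\sigma > 0$, taking imaginary parts produces
\[
\Im D(\lambda, k) = \sigma\tau\, \widehat w(k)\, |k| \int_{\mathbb{R}} \chi(v)\, \frac{v}{|\lambda^2 + |k|^2 v^2|^2}\, dv.
\]
A case split on $v \geq |k|$ versus $0 < v < |k|$, together with evenness and strict monotonicity of $\varphi$, gives $v\chi(v) \leq 0$ pointwise with strict inequality on a set of positive measure, so the integral is strictly negative. Hence $\Im D(\lambda, k) \neq 0$ whenever $\tau \neq 0$. On the slice $\tau = 0$, the same identity gives $D(\sigma, k) - 1 \geq 0$, so $D(\sigma, k) \geq 1 > 0$.

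For $k = 0$: use \eqref{eq:disp-k=0}, rewritten as $\widetilde D(\tilde\lambda, 0) = 1 + \frac{\widehat w(0)}{2} \int \varphi(u)/(\tilde\lambda/2 - iu)^2\, du$. With $\tilde\lambda = \tilde\sigma + i\tilde\tau$, the substitution $v = u - \tilde\tau/2$ and an odd/even decomposition in $v$ yield
\[
\Im \widetilde D(\tilde\lambda, 0) = \frac{\widehat w(0)\,\tilde\sigma}{4} \int_{\mathbb{R}} \bigl[\varphi(v + \tilde\tau/2) - \varphi(v - \tilde\tau/2)\bigr]\, \frac{v}{(\tilde\sigma^2/4 + v^2)^2}\, dv,
\]
and the same monotonicity argument (with $\tilde\tau$ now playing the role of $|k|$) forces the integrand to have one definite sign, whence $\Im \widetilde D \neq 0$ when $\tilde\tau \neq 0$. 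For $\tilde\tau = 0$, an integration by parts converts the double pole into $\varphi'$ and delivers
\[
\widetilde D(\tilde\sigma, 0) = 1 - \frac{\widehat w(0)}{2} \int_{\mathbb{R}} \frac{u\, \varphi'(u)}{\tilde\sigma^2/4 + u^2}\, du \;\geq\; 1,
\]
since $u\varphi'(u) \leq 0$.

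The main obstacle is the sign analysis of $v\chi(v)$ (and its $k = 0$ analogue): one must partition the real line carefully, exploit evenness of $\varphi$ in the subcase $0 < v < |k|$ to flip $(v-|k|)/2$ into $(|k|-v)/2$, and then use strict monotonicity of $\varphi$ on $(0, \Upsilon)$ to upgrade $\leq 0$ to strict inequality on a set of positive measure (so that $\Im \widetilde D$ really does not vanish). Everything else---the change of variables, the Laplace identity $\int_0^\infty e^{-\lambda t}\sin(t\alpha)\, dt = \alpha/(\lambda^2 + \alpha^2)$, and the integration by parts for $k = 0$---is routine.
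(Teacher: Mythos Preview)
Your proof is correct, and the underlying mechanism is the same as the paper's: both arguments reduce to the sign property $v\,[\varphi((v+|k|)/2)-\varphi((v-|k|)/2)]\le 0$ (equivalently $u\,[\varphi(u-|k|/2)-\varphi(u+|k|/2)]\ge 0$), which follows from the evenness of $\varphi$ together with $\varphi'<0$ on $(0,\Upsilon)$. The organization, however, is genuinely different. The paper argues by contradiction through the Hilbert-transform representation \eqref{def:Dlambda1}: assuming $D(\lambda,k)=0$, it reads off the system \eqref{eq:positive-Relambda2}--\eqref{eq:positive-Relambda3} for real and imaginary parts, shifts to obtain \eqref{eq:positive-Relambda4}, and then combines the two to reach a sign contradiction with $2|k|/\widehat w(k)>0$. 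You instead derive the closed formula $D-1=-\tfrac{\widehat w(k)|k|}{2}\int \chi(v)\,v\,(\lambda^2+|k|^2v^2)^{-1}\,dv$ and observe that its imaginary part is exactly $\sigma\tau$ times a strictly negative integral, so $\Im D\neq 0$ whenever $\tau\neq 0$; the case $\tau=0$ then drops out of the same formula with $D(\sigma,k)\ge 1$. Your route is a bit more direct---it shows that $\Im D$ alone already obstructs zeros off the real axis, without ever invoking $\Re D=0$---while the paper's contradiction argument stays closer to the classical Penrose-criterion template and uses both real and imaginary parts jointly. The $k=0$ case is handled in parallel fashion in both proofs.
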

\begin{proof}
	Consider the case when $k\neq 0$ first. As \eqref{def:Dlambda1} holds, it is equivalent to show that the existence of solution to
	\begin{equation}\label{eq:positive-Relambda}
		\mathcal{H}(z) - \mathcal{H}(z+|k|) = \frac{2|k|}{\widehat{w}(k)}
	\end{equation}
	in $\{ \Im z < 0 \}$ yields a contradiction. Note that
	\begin{equation*}
		\mathcal{H}(z) = \int_{\mathbb{R}} \frac{\varphi(u)}{z-u} du
		=  \int_{\mathbb{R}} \frac{(\Re z - u) \varphi(u)}{|z-u|^2} du - i \Im z \int_{\mathbb{R}} \frac{\varphi(u)}{|z-u|^2} du.
	\end{equation*}
	It follows that \eqref{eq:positive-Relambda} is equivalent to the following system of equations
	\begin{equation}\label{eq:positive-Relambda2}
		\int_{\mathbb{R}} \frac{\varphi(u)}{|z-u|^2} du - \int_{\mathbb{R}} \frac{\varphi(u)}{|z+|k|-u|^2} du =0
	\end{equation}
	and
	\begin{equation}\label{eq:positive-Relambda3}
		- \int_{\mathbb{R}} \frac{u \varphi(u)}{|z-u|^2} du - \int_{\mathbb{R}} \frac{(|k|-u)\varphi(u)}{|z+|k|-u|^2} du = \frac{2|k|}{\widehat{w}(k)}.
	\end{equation}
	by reading off the real and imaginary part separately.
	From \eqref{eq:positive-Relambda2}, it can be shown that 
	\begin{equation}\label{eq:positive-Relambda4}
		\int_{\RR} \frac{\varphi(u - \frac{|k|}{2}) - \varphi(u + \frac{|k|}{2})}{|z + \frac{|k|}{2} - u|^2} \, du  = 0. 
	\end{equation}
	On the other hand, we obtain from \eqref{eq:positive-Relambda3} that
	\[
	 	- \int_{\RR} \frac{(u- \frac{|k|}{2}) (\varphi(u - \frac{|k|}{2}) - \varphi(u + \frac{|k|}{2}))}{|z+ \frac{|k|}{2} - u|^2} \, du = \frac{2|k|}{\widehat{w}(k)}.
	\]
	Using \eqref{eq:positive-Relambda4}, it follows that
	\[
		- \int_{\RR} \frac{u(\varphi(u - \frac{|k|}{2}) - \varphi(u + \frac{|k|}{2}))}{|z+ \frac{|k|}{2} - u|^2} \, du = \frac{2|k|}{\widehat{w}(k)},
	\]
	which is a contradiction as the left hand side is nonpositive, upon using the fact that $\varphi$ is even, $\varphi'(u) < 0$ for $ 0 < u < \Upsilon$, and $\widehat{w}(k) \ge 0$.

	Next, when $k=0$, it suffices to show that
	\[
		\int_{\mathbb{R}} \frac{\varphi'(u)}{z - u} \, du = - \frac{2}{\widehat{w}(0)}
	\]
	does not have any solution in $\{\Im z < 0 \}$, in view of \eqref{eq:disp-k=0}. Suppose it admits a solution in $\{ \Im z <0 \}$. Then there holds
	\[
		\int_{\mathbb{R}} \frac{(\Re z -u) \varphi'(u)}{|z-u|^2} \, du - i \Im z \int_{\mathbb{R}} \frac{\varphi'(u)}{|z-u|^2} \, du = -\frac{2}{\widehat{w}(0)}.
	\]
	Comparing the imaginary part of both sides gives
	\[
		\int_{\mathbb{R}} \frac{\varphi'(u)}{|z-u|^2} \, du =0,
	\]
	which implies
	\[
		 \int_{\mathbb{R}} \frac{u \varphi'(u)}{|z-u|^2} \, du = \frac{2}{\widehat{w}(0)}.
	\]
	This is impossible given that $\varphi'$ is odd, $\varphi'(u) < 0$ for $0< u< \Upsilon$, and $\widehat{w}(0) \ge 0$.

\end{proof}
\begin{proposition}\label{prop:nozero2}
	Let $f=f(-\Delta)$ be the equilibrium satisfying \ref{equi:nonneg} and $w$ be the interaction potential satisfying \ref{assumption:potential} and \ref{assumption:potential-fourier}. For $k \in \mathbb{R}^d$, there are no zeros of $\widetilde{D}(\tilde{\lambda}, k)$ of the form $\tilde{\lambda}= i \tilde{\tau}$ with $|\tilde{\tau}| < 2\Upsilon + |k|$.
\end{proposition}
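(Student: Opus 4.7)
The plan is to decompose $\widetilde D(i\tilde\tau,k)$ into real and imaginary parts using the Sokhotski--Plemelj formula, use the vanishing of the imaginary part to pin down a finite list of candidate zeros, and then show the real part is strictly positive at each remaining candidate in the stated range $|\tilde\tau|<2\Upsilon+|k|$.

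First, for $k\neq 0$, approach $\lambda=i\tau$ from $\Re\lambda>0$ in \eqref{def:Dlambda1}. The arguments $(-i\lambda\pm|k|^2)/(2|k|)$ converge to the real numbers $a:=(\tilde\tau+|k|)/2$ and $b:=(\tilde\tau-|k|)/2$ from below the real axis, so the Plemelj formula gives
\[
\widetilde D(i\tilde\tau,k)=1+\frac{\widehat w(k)}{2|k|}\Bigl[\mathrm{PV}\!\!\int\frac{\varphi(u)}{a-u}du-\mathrm{PV}\!\!\int\frac{\varphi(u)}{b-u}du\Bigr]+i\pi\frac{\widehat w(k)}{2|k|}\bigl[\varphi(a)-\varphi(b)\bigr].
\]
(If $\widehat w(k)=0$ then $\widetilde D\equiv 1$ and there is nothing to prove, so assume $\widehat w(k)>0$.) Vanishing of the imaginary part forces $\varphi(a)=\varphi(b)$. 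By Lemma \ref{lem:varphi}, $\varphi$ is even, supported in $[-\Upsilon,\Upsilon]$, strictly positive and strictly decreasing on $(0,\Upsilon)$. Combined with $a-b=|k|>0$, this forces exactly one of the following: (A) $|a|,|b|<\Upsilon$ with $a=-b$, hence $\tilde\tau=0$; (B1) $a,b\ge\Upsilon$, giving $\tilde\tau=a+b\ge 2\Upsilon+|k|$; (B2) $a,b\le-\Upsilon$, giving $\tilde\tau\le -2\Upsilon-|k|$; (B3) $a\ge\Upsilon$ and $b\le-\Upsilon$, which forces $|k|\ge 2\Upsilon$ but leaves $\tilde\tau$ free. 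Cases B1 and B2 already place the zero outside the range $|\tilde\tau|<2\Upsilon+|k|$, so it remains to exclude A and B3 via the real part.

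For Case B3 the support of $\varphi$ lies in $[-\Upsilon,\Upsilon]\subset[b,a]$, so both Cauchy integrals are ordinary Lebesgue integrals and
\[
\int\frac{\varphi(u)}{a-u}du-\int\frac{\varphi(u)}{b-u}du=\int\varphi(u)\frac{b-a}{(a-u)(b-u)}du>0,
\]
since $(a-u)(b-u)\le 0$ on the support with strict inequality on an open set where $\varphi>0$, and $b-a=-|k|<0$. Hence $\Re\widetilde D(i\tilde\tau,k)>1$, contradicting $\widetilde D=0$. For Case A, $a=|k|/2$, $b=-|k|/2$; using the change of variable $u\mapsto -u$ and evenness of $\varphi$ one sees that $\mathrm{PV}\!\int\varphi(u)/(-|k|/2-u)du=-\mathrm{PV}\!\int\varphi(u)/(|k|/2-u)du$, so
\[
\Re\widetilde D(0,k)=1+\frac{\widehat w(k)}{|k|}\,\mathrm{PV}\!\int\frac{\varphi(u)}{|k|/2-u}du=1+\frac{\widehat w(k)}{|k|}\int_0^\infty\varphi'(u)\ln\left|\frac{|k|/2-u}{|k|/2+u}\right|du,
\]
after integration by parts and pairing $u$ with $-u$ using that $\varphi'$ is odd. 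For $u\in(0,\Upsilon)$, $\varphi'(u)<0$ and $|{|k|/2-u}|<|k|/2+u$, so the logarithm is strictly negative there and the integrand is strictly positive; thus $\Re\widetilde D(0,k)>1>0$.

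Finally, for $k=0$ the bound $|\tilde\tau|<2\Upsilon+|k|$ reads $|\tilde\tau|<2\Upsilon$. Apply the same Plemelj argument to the continuous extension \eqref{eq:disp-k=0}: the arguments $-i\tilde\lambda/2$ approach $\tilde\tau/2$ from below, yielding
\[
\widetilde D(i\tilde\tau,0)=1+\frac{\widehat w(0)}{2}\left[\mathrm{PV}\!\int\frac{\varphi'(u)}{\tilde\tau/2-u}du+i\pi\,\varphi'(\tilde\tau/2)\right].
\]
The imaginary part vanishes only when $\varphi'(\tilde\tau/2)=0$, i.e. $\tilde\tau=0$ or $|\tilde\tau|\ge 2\Upsilon$; only $\tilde\tau=0$ lies in the allowed range. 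At $\tilde\tau=0$, $\varphi'(u)/u$ is bounded near zero (since $\varphi'$ is odd and smooth enough), so the Cauchy integral is an ordinary one, and $\varphi'(u)/u<0$ on $(0,\Upsilon)\cup(-\Upsilon,0)$ gives $\Re\widetilde D(0,0)=1-\frac{\widehat w(0)}{2}\int\varphi'(u)/u\,du>1$. This rules out the remaining candidate and completes the proof.

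The main obstacle I anticipate is the Case~A sign computation: once one has the Plemelj decomposition, all the estimates reduce to careful bookkeeping of signs in principal-value integrals, but getting the integration-by-parts identity together with the correct symmetrization in $u\mapsto -u$ so that the nonpositivity of $\varphi'$ on $(0,\Upsilon)$ and the nonpositivity of $\ln|(|k|/2-u)/(|k|/2+u)|$ combine into a strictly positive integrand is the one delicate step.
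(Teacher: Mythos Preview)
Your argument is correct and follows the same overall plan as the paper: apply the Plemelj formula to split $\widetilde D(i\tilde\tau,k)$ into real and imaginary parts, let the imaginary part isolate the possible zeros, and then rule out the survivors via the real part. Two differences are worth noting. First, your case analysis for the vanishing of $\varphi(a)-\varphi(b)$ is actually more complete than the paper's: the paper asserts that for $0<|\tilde\tau|<2\Upsilon+|k|$ the set $\bigl(\frac{\tilde\tau-|k|}{2},\frac{\tilde\tau+|k|}{2}\bigr)\cap(-\Upsilon,\Upsilon)$ is always nonsymmetric, hence $\varphi(a)\neq\varphi(b)$, but this overlooks your Case~B3 ($a\ge\Upsilon$ and $b\le-\Upsilon$, which occurs whenever $|k|\ge 2\Upsilon$ and $|\tilde\tau|\le |k|-2\Upsilon$), where the intersection is all of $(-\Upsilon,\Upsilon)$ and the imaginary part genuinely vanishes for $\tilde\tau\neq 0$. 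Your direct sign computation of the real part in B3 covers exactly this gap. Second, at $\tilde\tau=0$ the paper evaluates the principal value by an explicit change of variables, treating $|k|\ge 2\Upsilon$ and $|k|<2\Upsilon$ separately, whereas you integrate by parts to a logarithmic kernel and read off positivity from the signs of $\varphi'$ and of $\ln\bigl|\frac{|k|/2-u}{|k|/2+u}\bigr|$; both arguments yield $\widetilde D(0,k)>1$, and yours has the advantage of being uniform in $|k|$. The $k=0$ case is handled identically in both proofs.
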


\begin{proof}
	For $k \neq 0$ and $\tilde{\lambda} = \tilde{\gamma} +i\tilde{\tau}$ with $|\tilde{\tau}| < 2 \Upsilon +|k|$, the dispersion relation is computed as
	\[
	\widetilde{D}(\tilde{\gamma} +i\tilde{\tau}, \, k)
	= 1 + \frac{\widehat{w}(k)}{2|k|} 
	\left[ \mathcal{H} \left( \frac{-i\tilde{\gamma}+\tilde{\tau} + |k|}{2} \right) - \mathcal{H} \left( \frac{-i\tilde{\gamma}+\tilde{\tau} - |k|}{2} \right)\right]
	\]
	Notice that $\frac{\tilde{\tau} \pm |k|}{2}$ might be contained in the support of $\varphi(u)$ when $|\tilde{\tau}| < 2\Upsilon + |k|$. Hence we use the Plemelj's formula to get
	\[
	\mathcal{H} \left( \frac{\tilde{\tau} \pm |k|}{2} \right)
	= \lim_{\tilde{\gamma} \rightarrow 0} \mathcal{H} \left(\frac{-i\tilde{\gamma} +\tilde{\tau} \pm |k|}{2} \right)
	= PV \int_{\{ |u| < \Upsilon \}} \frac{\varphi(u)}{ \frac{\tilde{\tau} \pm|k|}{2}  - u } \, du + i \pi \varphi \left(\frac{\tilde{\tau} \pm |k|}{2} \right)
	\]
	where $PV$ denotes the Cauchy principal value. Thus
	\begin{equation}\label{form-Ditau2}
		\begin{aligned}	
			\widetilde{D}(i\tilde \tau,k) 
			= 1 &+ \frac{\widehat{w}(k)}{2|k|} 
			\left[ PV \int_{\{|u|<\Upsilon\}} \frac{\varphi(u)}{\frac{\tilde\tau + |k|}{2} -u } du 
			- PV \int_{\{|u|<\Upsilon\}} \frac{\varphi(u)}{\frac{\tilde\tau - |k|}{2} - u} du \right]\\[3pt]
			& + \frac{i\pi\widehat{w}(k)}{2|k|} 
			\left[ \varphi \left( \frac{\tilde\tau + |k|}{2} \right) - \varphi \left(\frac{\tilde\tau - |k|}{2} \right) \right].
		\end{aligned}
	\end{equation}
	We can derive a lower bound on $\widetilde{D}(i\tilde{\tau}, k)$ by considering its imaginary part. Namely,
	\begin{equation*}
		\begin{aligned}
			|\widetilde{D}(i\tilde{\tau},k)  |
			&\ge \frac{\pi\widehat{w}(k)}{2|k|} 
			\left|\varphi \left( \frac{\tilde\tau + |k|}{2} \right) - \varphi \left(\frac{\tilde\tau - |k|}{2} \right) \right|
			\ge \frac{\pi\widehat{w}(k)}{2|k|} 
			\left|\int_{\frac{\tilde\tau - |k|}{2} }^{\frac{\tilde\tau + |k|}{2} } \varphi'(x)\; dx \right|.
		\end{aligned}
	\end{equation*}
	The above never vanishes for $0<|\tilde \tau| <2\Upsilon + |k|$, since 
	\[
	\left(\frac{\tilde\tau - |k|}{2},\frac{\tilde\tau + |k|}{2} \right) \cap (-\Upsilon, \Upsilon)
	\]
	is nonempty and nonsymmetric. On the other hand, at $\tilde \tau =0$, observe that $\widetilde{D}(0,k)$ is real-valued and
	\begin{align*}
		\widetilde{D}(0,k)
		&= 1 + \frac{\hat{w}(k)}{2|k|} 
		\left[PV \int_{\{|u|<\Upsilon\}} \frac{\varphi(u)}{\frac{|k|}{2} - u} du 
		- PV \int_{\{|u|<\Upsilon\}} \frac{\varphi(u)}{-\frac{|k|}{2}-u} du \right].
	\end{align*}
	When $|k|\ge 2\Upsilon$, the $PV$ integrals become the usual integration, giving 
	\begin{align*}
		\widetilde{D}(0,k)
		&= 1 + \frac{\hat{w}(k)}{2} 
		\int_{\{|u|<\Upsilon\}} \frac{\varphi(u)}{\frac{|k|^2}{4} - u^2} du \ge 1 .
	\end{align*}
	When $0 < |k|\le 2\Upsilon$, we use the definition to compute the principal values as
	\begin{align*}
		PV \int_{\{ |u| < \Upsilon \}} \frac{\varphi(u)}{\frac{|k|}{2} - u} \, du 
		&= \lim_{ \epsilon \rightarrow 0^{+}} \left[ \int_{-\Upsilon}^{\frac{|k|}{2} - \epsilon} \frac{\varphi(u)}{\frac{|k|}{2} - u} \, du + \int_{\frac{|k|}{2} + \epsilon}^{\Upsilon} \frac{\varphi(u)}{\frac{|k|}{2} - u} \, du  \right] \\[3pt]
		&= 		\int_0^{\Upsilon - \frac{|k|}{2}} \frac{\varphi(u -\frac{|k|}{2} ) - \varphi( u +\frac{|k|}{2})}{u} \, du + \int_{\Upsilon - \frac{|k|}{2}}^{\Upsilon + \frac{|k|}{2}} \frac{\varphi( u-\frac{|k|}{2} )}{u} \, du.
	\end{align*}
	Recall that $\varphi(u)$ is even in $u$ and $\varphi'(u) < 0$ for $0 < u < \Upsilon$, see Lemma \ref{lem:varphi}. In particular, the integrands in the above integrals are nonnegative. Therefore, we obtain 
	\begin{align*}
		\widetilde{D}(0,k)
		&= 1 + \frac{\widehat{w}(k)}{|k|}  PV \int_{\{|u|<\Upsilon\}} \frac{\varphi(u)}{ \frac{|k|}{2} - u} du
		\ge 1 + \frac{\widehat{w}(k)}{|k|}  \int_{0}^{\Upsilon - \frac{|k|}{2}} \frac{\varphi(u -\frac{|k|}{2} ) - \varphi( u +\frac{|k|}{2})}{u} \, du
	\end{align*}
	which in particular proves $\widetilde{D}(0,k)\ge 1$. 
	
	Next, we consider the case when $k=0$. For $|\tilde{\tau}| < 2\Upsilon$, we obtain from \eqref{eq:disp-k=0} and Plemelj's formula that
	\[
	 	\widetilde{D}( i \tilde{\tau}, 0)
	 	= \lim_{ \tilde{\gamma} \rightarrow 0^{+}} \widetilde{D} (\tilde{\gamma} + i \tilde{\tau}, 0)
	 	= 1 + \frac{\widehat{w}(0)}{2} PV \int_{\{ |u| < \Upsilon \}} \frac{\varphi'(u)}{\frac{\tilde{\tau}}{2} - u} \, du + \frac{i\pi}{2} \widehat{w}(0) \varphi' (\frac{\tilde{\tau}}{2}).
	\]
	The imaginary part is nonzero for $0 < |\tilde{\tau}| < 2\Upsilon$ given that $\varphi'(u) < 0$ for $0 < u < \Upsilon$. Moreover, at $\tilde{\tau} = 0$, we obtain that $\widetilde{D}(0,0)$ is real-valued and
	\[
		\widetilde{D}(0, 0) = 1 - \frac{\widehat{w}(0)}{2} PV \int_{\{|u|<\Upsilon\}} \frac{\varphi'(u)}{u} \, du  \ge 1,
	\]
	which completes the proof of Proposition \ref{prop:nozero2}.
\end{proof}

The above proposition proves that $\widetilde{D}(\lambda, k)$ has no pure imaginary zeros when $\Upsilon =\infty$. On the other hand, when $\Upsilon < \infty$, it is still necessary to study the region $\{ \tilde{\lambda} = i \tilde{\tau}, \   |\tilde{\tau}| \ge 2\Upsilon + |k| \}$. We emphasize that \ref{assumption:stability} is equivalent to the nonexistence of oscillatory modes in this region.

\begin{proposition}\label{prop:nozero3}
	Let $f=f(-\Delta)$ be the equilibrium satisfying \ref{equi:nonneg} and $w$ be the interaction potential satisfying \ref{assumption:potential}  and \ref{assumption:potential-fourier}. Then there are no zeros of $\widetilde{D}(\tilde{\lambda}, k)$ of the form $\tilde{\lambda} = i \tilde{\tau}$ with $|\tilde{\tau}| \ge 2  \Upsilon + |k|$ if and only if \ref{assumption:stability} holds.
\end{proposition}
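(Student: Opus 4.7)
The plan is to exploit the fact that the region $|\tilde{\tau}|\ge 2\Upsilon+|k|$ lies outside the support of the marginal $\varphi$, so the dispersion relation collapses to a real scalar quantity whose sign can be controlled by a single monotonicity argument. By the evenness of $\varphi$ the situation is symmetric under $\tilde{\tau}\mapsto -\tilde{\tau}$, so I restrict to $\tilde{\tau}\ge 2\Upsilon+|k|$.

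First, I will observe that for $k\neq 0$ in this region both poles $\tfrac{\tilde\tau\pm|k|}{2}$ of the Cauchy kernels in \eqref{form-Ditau2} fall outside $(-\Upsilon,\Upsilon)$, so the principal values reduce to ordinary integrals and the imaginary correction vanishes because $\varphi\bigl(\tfrac{\tilde\tau\pm|k|}{2}\bigr)=0$. A single partial-fraction step then combines the two Cauchy kernels and yields
\[
    \widetilde{D}(i\tilde\tau,k) \;=\; 1 - F(\tilde\tau,|k|), \qquad
    F(\tilde\tau,|k|) \;:=\; \frac{\widehat{w}(k)}{2}\int_{\{|u|<\Upsilon\}} \frac{\varphi(u)}{\bigl(\tfrac{\tilde\tau+|k|}{2}-u\bigr)\bigl(\tfrac{\tilde\tau-|k|}{2}-u\bigr)}\,du \;\ge\; 0,
\]
where nonnegativity uses $\tfrac{\tilde\tau-|k|}{2}-u\ge\Upsilon-u>0$ on the integration domain. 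The boundary case $k=0$ is handled directly from \eqref{eq:disp-k=0}, which produces the same expression with $|k|$ set to zero.

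Next, I will show that $F$ attains its supremum over $\{|\tilde\tau|\ge 2\Upsilon+|k|\}$ at the corner $(\tilde\tau,|k|)=(2\Upsilon,0)$. Two short monotonicity steps do the job: for fixed $|k|$, both denominator factors grow in $\tilde\tau$, so $F(\tilde\tau,|k|)\le F(2\Upsilon+|k|,|k|)$; and along this boundary curve,
\[
    F(2\Upsilon+|k|,|k|) \;=\; \frac{\widehat{w}(k)}{2}\int_{\{|u|<\Upsilon\}} \frac{\varphi(u)}{(\Upsilon-u+|k|)(\Upsilon-u)}\,du
\]
is a product of two nonnegative, non-increasing functions of $|k|$, using $\widehat{w}'\le 0$ from \eqref{assumption:potential} for the prefactor and an explicit differentiation under the integral for the second factor. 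Combining these,
\[
    \sup F \;=\; F(2\Upsilon,0) \;=\; \frac{\widehat{w}(0)}{2}\int_{\{|u|<\Upsilon\}} \frac{\varphi(u)}{(\Upsilon-u)^2}\,du,
\]
which is precisely the quantity in \eqref{assumption:stability}.

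Finally, the equivalence falls out: if \eqref{assumption:stability} holds then $F<1$ throughout the region, hence $\widetilde{D}(i\tilde\tau,k)>0$; conversely, if \eqref{assumption:stability} fails then $F(2\Upsilon,0)\ge 1$, and since $F(\tilde\tau,0)$ is continuous on $(2\Upsilon,\infty)$ and decays to $0$ at infinity, the intermediate value theorem produces $\tilde\tau^*\ge 2\Upsilon$ with $F(\tilde\tau^*,0)=1$, i.e.\ a zero of $\widetilde{D}$ at $(i\tilde\tau^*,0)$. The one subtlety I anticipate is the possible divergence of the integral in \eqref{assumption:stability} when $\varphi$ vanishes too weakly at $\Upsilon$ (as for the Fermi sea in $d\le 3$); this is actually harmless for the IVT argument, since one approaches $\tilde\tau=2\Upsilon$ from above where $F(\tilde\tau,0)$ blows up, so the value $1$ is necessarily crossed at some finite $\tilde\tau^*>2\Upsilon$.
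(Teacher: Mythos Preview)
Your proof is correct and follows essentially the same approach as the paper: both reduce $\widetilde{D}(i\tilde\tau,k)$ to the real expression $1-F(\tilde\tau,|k|)$ via partial fractions, then use monotonicity in $\tilde\tau$ (for fixed $k$) and in $|k|$ (along the boundary curve $\tilde\tau=2\Upsilon+|k|$) to show the infimum of $\widetilde{D}$ over the region equals $1-F(2\Upsilon,0)$, which is exactly the quantity in \eqref{assumption:stability}. Your explicit IVT argument for the converse direction, including the case where the integral in \eqref{assumption:stability} diverges, is a welcome clarification of a point the paper leaves implicit.
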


\begin{proof}
	It is sufficient to consider the case when $\Upsilon < \infty$. Assume that $\tilde{\lambda} = i\tilde{\tau}$ with $|\tilde{\tau}| \ge 2 \Upsilon +|k|$. Then we have $| \frac{-\tilde{\tau} \pm |k|}{2} | \ge \Upsilon$ so that 
	\begin{equation}
		\begin{aligned}\label{eq:Displargetau}
			\widetilde{D}(i \tilde{\tau}, k)
			&= 1 - \frac{\widehat{w}(k)}{2}
			\int_{ \{|u| < \Upsilon \}} \frac{\varphi(u)}{ \left( \frac{\tilde{\tau} -|k|}{2} - u \right) \left( \frac{\tilde{\tau} +|k|}{2} - u \right)} \, du.
		\end{aligned}
	\end{equation}
	As $\varphi(u)$ is even in $u$, the dispersion relation $\widetilde{D}(i\tilde{\tau}, k)$ is real-valued and even in $\tilde{\tau}$. Hence it is enough to consider the case when $\tilde{\tau} \ge 2\Upsilon + |k|$. Define 
	\begin{equation}
		\Phi(|k|) = \widetilde{D}(i (2\Upsilon +|k|) , k) = 1- \frac{\widehat{w}(k)}{2} \int_{\{ |u| <\Upsilon \}}  \frac{\varphi(u)}{(\Upsilon - u)(\Upsilon +|k| - u)} \, du,
	\end{equation}
	which is a real-valued function on $[0, \infty]$. Observe that $\Phi(\infty) = 1$ and
	\[
	\Phi(0) =  1 - \frac{\widehat{w}(0)}{2} \int_{\{ |u| < \Upsilon \}} \frac{\varphi(u)}{(\Upsilon - u)^2} \, du <1.
	\]
	Note that \ref{assumption:stability} holds if and only if $\Phi(0) >0$. Meanwhile, $\Phi$ is a function from $[0,\infty]$ onto $[\Phi(0), 1]$ as
	\[
	\Phi'(|k|) 
	=  -\frac{\widehat{w}'(k)}{2} \int_{ \{ |u| < \Upsilon \}} \frac{\varphi(u)}{(\Upsilon-u)(\Upsilon+|k|-u)} \, du 
	+ \frac{\widehat{w}(k)}{2} \int_{ \{|u|<\Upsilon \}} \frac{\varphi(u)}{(\Upsilon-u)(\Upsilon+|k|-u)^2} \, du
	\]
	is nonnegative. Also, it can be computed that 
	\[
		\partial_{\tilde{\tau}} \widetilde{D}(i\tilde{\tau}, k) = \frac{\widehat{w}(k)}{2} \int_{\{ |u| < \Upsilon \}} \frac{( \frac{\tilde{\tau}}{2} - u) \varphi(u)}{\left[ ( \frac{\tilde{\tau}}{2} - u)^2 - \frac{|k|^2}{4} \right]^2 } \, du \ge 0
	\]
	for $\tilde{\tau} > 2\Upsilon + |k|$. Hence, for a fixed $k$, the dispersion relation $\widetilde{D}(i\tilde{\tau}, k)$ is a function from $[2\Upsilon + |k| ,\infty]$ onto $[\Phi(|k|) ,1]$. This proves that $\widetilde{D}(i\tilde{\tau} ,k) > 0$ for all $k \in \mathbb{R}^d$ and $|\tilde{\tau}| \ge 2\Upsilon + |k|$ if and only if $\Phi(0) >0$.
\end{proof}

Collecting all the results, we get the stability criterion using $\langle \cdot \rangle \widehat{\varphi} \in L^1$ as follows.
\begin{proof}[Proof of Theorem \ref{thm:Penrose-Lindhard}]
	Recall that
	\begin{equation*}
	\begin{split}
		\widetilde{D}(\tilde{\lambda}, k) 
		= 1 + \widehat{w}(k) \int_{0}^{\infty} e^{-\tilde{\lambda} t /2} \frac{\sin(t|k|/2)}{|k|} \widehat{\varphi}(t) \, dt,
	\end{split}
	\end{equation*}
	Noting that $\widehat{\varphi} \in L^{1}$, we get
	\[
	|\widetilde{D}(\tilde{\lambda}, k) - 1|  \lesssim |k|^{-1} \widehat{w}(k) \int_{0}^{\infty} | \widehat{\varphi}(t) | \, dt \lesssim 	|k|^{-1} \widehat{w}(k),
	\]
	which tends to zero as $|k| \rightarrow \infty$, uniformly in $\Re \tilde{\lambda} \ge 0$. Also, we obtain for each $k \in \mathbb{R}^d$ that
	\[
		|\widetilde{D}(\tilde{\lambda}, k) - 1 | \lesssim \widehat{w}(k) \left|\int_{0}^{\infty} e^{-\tilde{\lambda} t /2} \frac{\sin(t|k|/2)}{|k|} \widehat{\varphi}(t) \, dt \right|,
	\]
	which tends to zero as $|\tilde{\lambda}| \rightarrow \infty$ with $\Re \tilde{\lambda} \ge 0$, given that $|\cdot| \widehat{\varphi} \in L^1$.
	Hence \eqref{eq:Penrose-Lindhard} holds for large $|k|$ and large $|\tilde{\lambda}|$, respectively.
	Now it suffices to consider the case when $|k| \lesssim 1$ and $|\tilde{\lambda}| \lesssim 1$ with $\Re \tilde{\lambda} \ge 0$. By compactness, it is sufficient to check $\widetilde{D}(\tilde{\lambda}, k) \neq 0$ for $k \in \mathbb{R}^d$ and $\Re \tilde{\lambda} \ge 0$. This is already proved in Propositions \ref{prop:nozero1}, \ref{prop:nozero2}, and \ref{prop:nozero3}. This concludes the proof of Theorem \ref{thm:Penrose-Lindhard}.
\end{proof}

\subsection*{Acknowledgement}
The author would like to thank Toan T. Nguyen for many helpful discussions.
The research is supported in part by the NSF under grants DMS-2054726 and DMS-2349981. 
The author has no conflict of interest to declare that are relevant to this article.
%%%%%%%%%%%%%%%%%
\bibliographystyle{plain}

\begin{thebibliography}{10}
	
	\bibitem{Bedrossian2018}
	Jacob Bedrossian, Nader Masmoudi, and Cl\'{e}ment Mouhot.
	\newblock Landau damping in finite regularity for unconfined systems with
	screened interactions.
	\newblock {\em Comm. Pure Appl. Math.}, 71(3):537--576, 2018.
	
	\bibitem{Bove1974}
	A.~Bove, G.~Da~Prato, and G.~Fano.
	\newblock An existence proof for the {H}artree-{F}ock time-dependent problem
	with bounded two-body interaction.
	\newblock {\em Comm. Math. Phys.}, 37:183--191, 1974.
	
	\bibitem{Bove1976}
	A.~Bove, G.~Da~Prato, and G.~Fano.
	\newblock On the {H}artree-{F}ock time-dependent problem.
	\newblock {\em Comm. Math. Phys.}, 49(1):25--33, 1976.
	
	\bibitem{Chadam1976}
	J.~M. Chadam.
	\newblock The time-dependent {H}artree-{F}ock equations with {C}oulomb two-body
	interaction.
	\newblock {\em Comm. Math. Phys.}, 46(2):99--104, 1976.
	
	\bibitem{Chen2017}
	Thomas Chen, Younghun Hong, and Nata\v{s}a Pavlovi\'{c}.
	\newblock Global well-posedness of the {NLS} system for infinitely many
	fermions.
	\newblock {\em Arch. Ration. Mech. Anal.}, 224(1):91--123, 2017.
	
	\bibitem{Chen2018}
	Thomas Chen, Younghun Hong, and Nata\v{s}a Pavlovi\'{c}.
	\newblock On the scattering problem for infinitely many fermions in dimensions
	{$d\ge3$} at positive temperature.
	\newblock {\em Ann. Inst. H. Poincar\'{e} C Anal. Non Lin\'{e}aire},
	35(2):393--416, 2018.
	
	\bibitem{Collot2020}
	C.~Collot and A.-S. de~Suzzoni.
	\newblock Stability of equilibria for a {H}artree equation for random fields.
	\newblock {\em J. Math. Pures Appl. (9)}, 137:70--100, 2020.
	
	\bibitem{Collot2022}
	Charles Collot and Anne-Sophie de~Suzzoni.
	\newblock Stability of steady states for {H}artree and {S}chr\"{o}dinger
	equations for infinitely many particles.
	\newblock {\em Ann. H. Lebesgue}, 5:429--490, 2022.
	
	\bibitem{Suzzoni2015}
	Anne-Sophie de~Suzzoni.
	\newblock An equation on random variables and systems of fermions.
	\newblock {\em arXiv preprint arXiv:1507.06180}, 2015.
	
	\bibitem{Hadama2023}
	Sonae Hadama.
	\newblock Asymptotic stability of a wide class of steady states for the
	{H}artree equation for random fields.
	\newblock {\em arXiv preprint arXiv:2303.02907}, 2023.
	
	\bibitem{Hadama2025b}
	Sonae Hadama.
	\newblock Asymptotic stability of a wide class of stationary solutions for the
	{H}artree and {S}chr\"odinger equations for infinitely many particles.
	\newblock {\em Ann. H. Lebesgue}, 8:181--218, 2025.
	
	\bibitem{Hadama2025c}
	Sonae Hadama and Younghun Hong.
	\newblock Global well-posedness of the nonlinear {H}artree equation for
	infinitely many particles with singular interaction.
	\newblock {\em J. Funct. Anal.}, 289(9):Paper No. 111102, 38, 2025.
	
	\bibitem{HanKwan2021}
	Daniel Han-Kwan, Toan~T. Nguyen, and Fr{\'e}d{\'e}ric Rousset.
	\newblock Asymptotic stability of equilibria for screened {Vlasov}-{Poisson}
	systems via pointwise dispersive estimates.
	\newblock {\em Ann. PDE}, 7(2):37, 2021.
	\newblock Id/No 18.
	
	\bibitem{Hwang2011}
	Hyung~Ju Hwang, Alan Rendall, and Juan J.~L. Vel\'{a}zquez.
	\newblock Optimal gradient estimates and asymptotic behaviour for the
	{V}lasov-{P}oisson system with small initial data.
	\newblock {\em Arch. Ration. Mech. Anal.}, 200(1):313--360, 2011.
	
	\bibitem{Lewin2014}
	Mathieu Lewin and Julien Sabin.
	\newblock The {H}artree equation for infinitely many particles, {II}:
	{D}ispersion and scattering in 2{D}.
	\newblock {\em Anal. PDE}, 7(6):1339--1363, 2014.
	
	\bibitem{Lewin2015}
	Mathieu Lewin and Julien Sabin.
	\newblock The {H}artree equation for infinitely many particles {I}.
	{W}ell-posedness theory.
	\newblock {\em Comm. Math. Phys.}, 334(1):117--170, 2015.
	
	\bibitem{Mouhot2011}
	Cl{\'e}ment Mouhot and C{\'e}dric Villani.
	\newblock On {Landau} damping.
	\newblock {\em Acta Math.}, 207(1):29--201, 2011.
	
	\bibitem{Nguyen2025a}
	Toan~T. Nguyen and Chanjin You.
	\newblock Modified {S}cattering for {L}ong-{R}ange {H}artree {E}quations of
	{I}nfinite {R}ank {N}ear {V}acuum.
	\newblock {\em SIAM J. Math. Anal.}, 57(6):6486--6497, 2025.
	
	\bibitem{Nguyen2025}
	Toan~T. Nguyen and Chanjin You.
	\newblock {Plasmons} for the {Hartree} equations with {Coulomb} interaction.
	\newblock {\em Probab. Math. Phys.}, 6(3):913--960, 2025.
	
	\bibitem{Nguyen2020}
	Trinh~T. Nguyen.
	\newblock Derivative estimates for screened {V}lasov-{P}oisson system around
	{P}enrose-stable equilibria.
	\newblock {\em Kinet. Relat. Models}, 13(6):1193--1218, 2020.
	
	\bibitem{Pusateri2021}
	Fabio Pusateri and Israel~Michael Sigal.
	\newblock Long-time behaviour of time-dependent density functional theory.
	\newblock {\em Arch. Ration. Mech. Anal.}, 241(1):447--473, 2021.
	
	\bibitem{Zagatti1992}
	Sandro Zagatti.
	\newblock The {C}auchy problem for {H}artree-{F}ock time-dependent equations.
	\newblock {\em Ann. Inst. H. Poincar\'{e} Phys. Th\'{e}or.}, 56(4):357--374,
	1992.
	
\end{thebibliography}

\end{document}